\newcommand{\K}{\mathcal{K}}
\newcommand{\A}{\mathcal{A}}
\newcommand{\N}{\mathbb{N}}
\newcommand{\R}{\mathbb{R}}
\newcommand{\Z}{\mathbb{Z}}
\newcommand{\T}{\mathbb{T}}
\newcommand{\auto}{\mathsf{A}}
\newcommand{\LL}{\mathcal{L}}
\newcommand{\eps}{\epsilon}
\newcommand{\trans}{\mathsf{T}}
\newcommand{\nuhat}{\widehat{\nu}}
\newcommand{\psihat}{\widehat{\psi}}
\newtheorem{theorem}{Theorem}
\newtheorem{proposition}{Proposition}
\newtheorem{lemma}{Lemma}
\theoremstyle{definition}
\newtheorem{remark}{Remark}
\theoremstyle{remark}
\newtheorem{example}{Example}
\newtheorem*{ackno}{Acknowledgement}
\begin{document}
\author[P. J. Grabner]{Peter J. Grabner\textsuperscript{*}}
\thanks{\textsuperscript{*} The author is supported by the Austrian Science
  Fund FWF project F5503 (part of the Special Research Program (SFB)
  ``Quasi-Monte Carlo Methods: Theory and Applications'')}
\address{Institut f\"ur Analysis und Zahlentheorie,
Technische Universit\"at Graz,
Kopernikusgasse 24/II,
8010 Graz,
Austria}
\email{peter.grabner@tugraz.at}
\dedicatory{Dedicated to J\"org Thuswaldner on the occasion of his
  50\textsuperscript{th}  birthday.}
\begin{abstract}
  We study measures that are obtained as push-forwards of measures of maximal
  entropy on sofic shifts under digital maps
  $(x_k)_{k\in\N}\mapsto\sum_{k\in\N}x_k\beta^{-k}$, where $\beta>1$ is a Pisot
  number. We characterise the continuity of such measures in terms of the
  underlying automaton and show a purity result.
\end{abstract}
\title{Purity results for some arithmetically defined measures}
\subjclass[2010]{11A63, 68Q45, 11K55, 37A45}
\maketitle
\section{Introduction}\label{sec:introduction}
\nocite{Lothaire2002:algebraic_combinatorics_words}
Digital representations of real numbers by infinite series
\begin{equation}\label{eq:digit-sum}
  \sum_{k=1}^\infty\frac{x_k}{\beta^k}
\end{equation}
with $x_k\in\A$, a finite alphabet, and $\beta>1$ have attracted attention from
different points of view. The underlying dynamical system given by the map
$T_\beta:x\mapsto \beta x\mod1$ has been studied extensively since the seminal
papers \cite{Renyi1957:representation_real_numbers,
  Parry1960:beta_expansions_real}. For an overview of the development we refer
to \cite{Dajani_Kraaikamp2002:ergodic_theory_numbers,
  Frougny2002:numeration_systems, Frougny1992:representations_numbers_finite,
  Beal_Perrin1997:symbolic_dynamics_finite}. The original study was carried out
for the ``canonical'' digit set $\A=\{0,1,\ldots,\lceil\beta\rceil-1\}$, but
many variations have been studied. It turned out in
\cite{Parry1960:beta_expansions_real} that Pisot numbers $\beta$ play a very
important r\^ole in that context, as for these $\beta$ the transformation
$T_\beta$ has especially nice properties. In this case the set of
representations of all real numbers in $[0,1]$ obtained by iteration of
$T_\beta$ is a sofic shift (see \cite{Parry1960:beta_expansions_real}); the
definition of a sofic shift will be given in
Section~\ref{sec:regular-languages}. A Pisot number $\beta=\beta_1$ (of degree
$r\geq1$) is an algebraic integer all of whose Galois conjugates
$\beta_2,\ldots,\beta_r$ have modulus $<1$ (see
\cite{Bertin_Decomps-Guilloux_Grandet-Hugot+1992:pisot_salem_numbers}). Notice
that integers $\geq2$ are also considered as Pisot numbers. Pisot numbers have
the nice property that their powers are ``almost integers'', meaning that
$(\beta^n\mod1)_{n\in\N}$ tends to $0$.

In the present paper we will change the point of view starting with a one-sided
sofic shift space $\K^+\subset\A^{\N}$, where $\A\subset\Z$ is the underlying
set of digits. Then we consider a map $\phi^+:\K^+\to\R$ mapping
$(x_k)_{k\in\N}$ to the series \eqref{eq:digit-sum} for $\beta$ a Pisot
number. Of course, in general nothing can be said about injectivity of this
map, or even the structure of the image. Even for the full shift $\A^\N$ the
structure of the image can be intricate (see
\cite{Winkler2001:order_theoretic_structure}). If $\K^+$ is equipped with a
shift invariant measure, then this measure is pushed forward to $\R$ by the map
$\phi^+$. The properties of measures obtained in this way are our object of
study. For the measure on $\K^+$ we will take the unique shift invariant
measure of maximal entropy, or Parry measure, see
\cite{Parry1966:symbolic_dynamics_transformations,
  Parry1964:intrinsic_markov_chains}. This will be discussed in
Section~\ref{sec:meas-shift-spac}.

Measures of this kind occur in different contexts. Possibly, the earliest
occurrence was in two papers by Erd\H{o}s \cite{Erdoes40,Erdoes39}, where he
proved the singularity of this measure for $\K$ being the full shift
$\{\pm1\}^{\mathbb{N}}$. This is the most studied case and goes under the name
``Bernoulli convolution''. We will discuss that further in
Section~\ref{sec:bern-conv}.

In the context of
studying redundant expansions of integers in the context of fast multiplication
algorithms used in cryptography, the precise study of the number of
representations of an integer $n$ in the form
\begin{equation*}
  n=\sum_{k=0}^Kx_k2^k\quad\text{with }x_k\in\{0,\pm1\}
\end{equation*}
and minimising
\begin{equation*}
  \sum_{k=0}^K|x_k|
\end{equation*}
led to a singular measure on $[-1,1]$ (see
\cite{Grabner_Heuberger2006:number_optimal_base}). Further results in this
direction were obtained in
\cite{Grabner_Heuberger_Prodinger2005:counting_optimal_joint,
  Grabner_Heuberger_Prodinger2004:distribution_results_low}. A more general
point of view replacing the powers of $2$ by the solution of a linear
recurrence has been taken in
\cite{Grabner_Steiner2011:redundancy_of_minimal}. Furthermore, such measures
occur as spectral measures of dynamical systems related to numeration systems
\cite{Grabner_Liardet_Tichy2005:spectral_disjointness_dynamical}, in the study
of diffraction patters of tilings (see
\cite{Baake_Grimm2019:fourier_transform_rauzy,
  Baake_Grimm2013:aperiodic_order_I}), and as spectral measures of substitution
dynamical systems (see \cite{Queffelec2010:substitution_dynamical_systems}). In
Section~\ref{sec:gener-erdh-meas} we will present two main results, namely the
fact that the measure is pure (meaning that the Lebesgue decomposition only has
one term), and a characterisation of continuity of the measure in terms of
properties of the underlying automaton.

Erd\H{o}s' proof of the singularity of the measure in
\cite{Erdoes40,Erdoes39} uses the fact that the Fourier transform of these
measures does not tend to $0$ at $\infty$, and this method was used in many
other cases.  This motivates the study of the Fourier transform of the measures
under consideration. The transform can be expressed in terms of infinite matrix
products, which allow the computation of limits
\begin{equation}\label{eq:limnuhat}
  \lim_{k\to\infty}\nuhat(z\beta^k)
\end{equation}
for $z\in\Z[\beta]$. In Section~\ref{sec:four-transf-matr} we find an
interpretation of these limits as Fourier coefficients of a measure on the
torus given by a two sided version of the map $\phi^+$. A similar map has been
introduced and studied in \cite{Sidorov2001:bijective_general_arithmetic,
  Sidorov_Vershik1998:ergodic_properties_erdoes}. This will be used to show
that the vanishing of the limits \eqref{eq:limnuhat} for all $z\in\Z[\beta]$,
$z\neq0$ is equivalent to absolute continuity for these measures.

Very recently, similar results for self similar measures have been obtained in
\cite{Bremont2021:self_similar_measures}. There for a set of linear maps
$\phi_k:x\mapsto r_kx+b_k$ with $0<r_k< 1$ ($k=1,\ldots,N$) the distribution
measure $\nu$ of the map
\begin{equation*}
  (\omega_1,\omega_2,\ldots)\mapsto
  \lim_{n\to\infty}\phi_{\omega_n}\circ\phi_{\omega_{n-1}}\circ
  \cdots\circ\phi_{\omega_1}(x_0)
\end{equation*}
is studied, where $\{1,\ldots,N\}^{\mathbb{N}}$ is equipped with the infinite
product of the measures $\mathbb{P}(\{k\})=p_k$ ($k=1,\ldots,N$) for any choice
of the vector $(p_1,\ldots,p_N)$ with $p_k>0$ and $\sum_kp_k=1$. It is shown
(see \cite[Theorem~2.3]{Bremont2021:self_similar_measures}) that
$\widehat\nu(t)\not\to0$ for $|t|\to\infty$, if and only if the contraction
factors $r_k$ are all negative integer powers of the same Pisot number. It is
also shown (see \cite[Theorem~2.4]{Bremont2021:self_similar_measures}) that for
these measures $\lim_{|t|\to\infty}\widehat{\nu}(t)=0$ (called the
\emph{Rajchman property}) is equivalent to absolute continuity. This is a
phenomenon very similar to Theorem~\ref{thm:Rajchman} in the context of this
paper.

In a final Section~\ref{sec:examples} we exhibit several simple examples as
applications of our results.

\section{Regular languages and sofic shifts}
\label{sec:regular-languages}
Let $\A\subset\mathbb{Z}$ be a finite alphabet, and denote by $\A^*$ the set of
finite words over $\A$,
i.~e. $\A^*=\{\epsilon\}\cup\bigcup_{n\in\mathbb{N}}\A^n$, where
$\epsilon$ denotes the empty word.  Let $G=(V,E)$ be
a finite directed graph (see~\cite{Diestel2017:graph_theory}) equipped with a
\emph{labelling} $\ell:E\to\A$. Then the pair $(G,\ell)$ is called a
\emph{labelled graph}. A finite automaton is a quadruple $\auto=(G,\ell,I,T)$,
where $I$ (initial states) and $T$ (terminal states) are subsets of the set of
vertices (also called \emph{states} in this context). A \emph{path} of length
$n$ in the graph $G$ is a sequence of edges $p=e_1e_2\ldots e_n$, such that for
every $j=1,\ldots,n-1$ the edges $e_j=(\mathrm{i}(e_j),\mathrm{t}(e_j)))$
satisfy $\mathrm{t}(e_j)=\mathrm{i}(e_{j+1})$; the terminal vertex
$\mathrm{t}(e_j)$ of every edge coincides with the initial vertex
$\mathrm{i}(e_{j+1})$ of the consecutive edge. We say that $p$ connects
$\mathrm{i}(e_1)$ and $\mathrm{t}(e_n)$. The language $\LL=\LL(\auto)$
recognised by the automaton $\auto$ is given by
\begin{equation}
  \label{eq:language}
  \begin{split}
    \LL_n&=\left\{\ell(e_1)\ell(e_2)\ldots\ell(e_n)\,\middle|\, e_1e_2\ldots
      e_n\text{ a path in }G, \mathrm{i}(e_1)\in I, \mathrm{t}(e_n)\in T
    \right\}\\
    \LL&=\{\eps\}\cup\bigcup_{n=1}^\infty\LL_n,
  \end{split}
\end{equation}
where $\eps$ denotes the \emph{empty word}, which by definition has length $0$;
$\LL_n$ is the set of words of length $n$. A subset of $\A^*$ is called a
\emph{regular language}, if it is recognised by a finite automaton $\auto$. A
language $\LL$ is called \emph{irreducible}, if for any $w_1,w_2\in\LL$ there
exists a $w\in\A^*$ such that $w_1ww_2\in\LL$. This is equivalent to the
underlying graph being \emph{strongly connected}; for any two vertices
$v_1,v_2\in V$ there is a path connecting $v_1$ with $v_2$. The language is
called \emph{primitive}, if there is an $N\in\mathbb{N}$ such that for any
$w_1,w_2\in\LL$ and any $n\geq N$ there exists a word $w\in\A^*$ of length $n$
such that $w_1ww_2\in\LL$. 

From now on we assume that all languages are primitive. For a comprehensive
introduction to the theory of formal languages we refer to
\cite{Eilenberg1974:automata_languages_machines,
  Harrison1978:introduction_formal_language,
  Sakarovitch2009:elements_automata_theory}.

For a given automaton $\auto$ we study the sets of one- and two-sided infinite
words recognised by $\auto$
\begin{equation}
  \label{eq:shifts}
  \begin{split}
    \K_I^+&=\left\{(\ell(e_k))_{k\in\mathbb{N}}\,\middle|\, \forall n\in\mathbb{N}:
      e_1e_2\ldots e_n\text{ is a path in }G, \mathrm{i}(e_1)\in I\right\}\\
    \K^+&=\left\{(\ell(e_k))_{k\in\mathbb{N}}\,\middle|\, \forall n\in\mathbb{N}:
      e_1e_2\ldots e_n\text{ is a path in }G\right\}\\
    \K&=\left\{(\ell(e_k))_{k\in\mathbb{Z}}\,\middle|\,\forall m<n: e_me_{m+1}\ldots
      e_n\text{ is a path in }G\right\}.
  \end{split}
\end{equation}
The set $\K$ is called the (two-sided) \emph{sofic} shift associated to
$\auto$ (see~\cite{Lind_Marcus1995:introduction_symbolic_dynamics}), $\K^+$ is
the one-sided sofic shift, both spaces are closed under the according shift
transformation
\begin{align*}
  \sigma^+&:\K^+\to\K^+: (x_k)_{k\in\mathbb{N}}\mapsto
  (x_{k+1})_{k\in\mathbb{N}}\\
  \sigma&:\K\to\K: (x_k)_{k\in\mathbb{Z}}\mapsto (x_{k+1})_{k\in\mathbb{Z}}.
\end{align*}
The space $\K_I^+$, which can be seen as an extension of $\LL$ to infinite
words, is in general not closed under $\sigma^+$, but the relation
\begin{equation*}
  \K^+=\bigcup_{n=0}^N(\sigma^+)^n\K_I^+
\end{equation*}
holds for some $N\in\mathbb{N}$ as a consequence of the strong connectedness of
the graph underlying $\auto$. Notice that $\sigma$ is bijective, whereas
$\sigma^+$ is not.
\section{Measures on shift spaces}\label{sec:meas-shift-spac}
We equip the spaces $\K_I^+$ and $\K^+$ with a ``canonical'' measure that we
will define now. We define the \emph{cylinder set}
\begin{equation*}
  [\varepsilon_1,\varepsilon_1,\ldots,\varepsilon_{k}]=
  \left\{(x_n)_{n\in\mathbb{N}}\in\K_I^+\,\middle|\,
  x_1=\varepsilon_1,\ldots,x_k=\varepsilon_k\right\}
\end{equation*}
for $\varepsilon_i\in\A$ for $i=1,\ldots,k$. The cylinder sets generate a
topology on $\K_I^+$ and also the $\sigma$-algebra of Borel sets for this
topology. We define
\begin{equation}
  \label{eq:muI-def}
  \mu_I^+([\varepsilon_1,\varepsilon_2,\ldots,\varepsilon_{k}])=
  \lim_{n\to\infty}
  \frac{\#([\varepsilon_1,\varepsilon_2,\ldots,\varepsilon_{k}]\cap\LL_n)}
  {\#\LL_n};
\end{equation}
the existence of the limit will become obvious from the following discussion.

For $a\in\A$ define the \emph{$a$-transition matrix} by
\begin{equation}
  \label{eq:Ma-def}
  (M_a)_{ij}=
  \begin{cases}
    1&\text{if }(i,j)\in E\text{ and }\ell((i,j))=a\\
    0&\text{otherwise}
  \end{cases}
\end{equation}
and
\begin{equation}
  \label{eq:M-def}
  M=\sum_{a\in\A}M_a.
\end{equation}
Furthermore, set $\mathbf{v}_I$ and $\mathbf{v}_T$ the indicator vectors of the
sets $I$ and $T$, respectively. Then for $n\geq k$ we have
\begin{equation*}
  \#([\varepsilon_1,\varepsilon_1,\ldots,\varepsilon_{k}]\cap\LL_n)=
  \mathbf{v}_I^\trans M_{\varepsilon_1}\cdots M_{\varepsilon_k}M^{n-k}\mathbf{v}_T.
\end{equation*}
By the assumption that the language $\LL$ is primitive which is equivalent to
the fact that $M$ is primitive, the Perron-Frobenius theorem
(see\cite{Seneta1981:nonnegative_matrices_markov}) implies that there is a
dominating eigenvalue $\lambda>0$ such that
\begin{equation}
  \label{eq:perron}
  M^n=\lambda^n\mathbf{v}_R\mathbf{v}_L^\trans+o(\lambda^n),
\end{equation}
where $\mathbf{v}_L^\trans$ is a left eigenvector of $M$ for the eigenvalue
$\lambda$, and $\mathbf{v}_R$ is a right eigenvector with
\begin{equation*}
  \mathbf{v}_L^\trans\mathbf{v}_R=1.
\end{equation*}
With this we can write the quantity under the limit in \eqref{eq:muI-def} as
\begin{equation*}
  \frac{\mathbf{v}_I^\trans M_{\varepsilon_1}\cdots M_{\varepsilon_k}
    \left(\lambda^{n-k}\mathbf{v}_R\mathbf{v}_L^\trans
      +o(\lambda^n)\right)\mathbf{v}_T}
  {\mathbf{v}_I^\trans\left(\lambda^n\mathbf{v}_R\mathbf{v}_L^\trans
      +o(\lambda^n)\right)\mathbf{v}_T},
\end{equation*}
which shows that the limit exists and equals
\begin{equation}
  \label{eq:muI-limit}
  \mu_I^+([\varepsilon_1,\varepsilon_2,\ldots,\varepsilon_{k}])=
  \lambda^{-k}\frac{\mathbf{v}_I^\trans M_{\varepsilon_1}\cdots
    M_{\varepsilon_k}\mathbf{v}_R}{\mathbf{v}_I^\trans\mathbf{v}_R}.
\end{equation}
On $\K^+$ we define the measure $\mu^+$ by
\begin{equation}
  \label{eq:mu-def}
  \mu^+([\varepsilon_1,\varepsilon_2,\ldots,\varepsilon_{k}])=
  \lambda^{-k}\mathbf{v}_L^\trans M_{\varepsilon_1}\cdots
    M_{\varepsilon_k}\mathbf{v}_R.
\end{equation}
This measure is shift invariant by the observation
\begin{multline*}
  \mu^+((\sigma^+)^{-1}[\varepsilon_1,\varepsilon_2,\ldots,\varepsilon_{k}])=
  \sum_{a\in\A}\mu^+([a,\varepsilon_1,\varepsilon_2,\ldots,\varepsilon_{k}])\\
  =  \sum_{a\in\A}\lambda^{-k-1}\mathbf{v}_L^\trans M_aM_{\varepsilon_1}\cdots
  M_{\varepsilon_k}\mathbf{v}_R=
  \lambda^{-k-1}\mathbf{v}_L^\trans MM_{\varepsilon_1}\cdots
  M_{\varepsilon_k}\mathbf{v}_R\\
  =  \lambda^{-k}\mathbf{v}_L^\trans M_{\varepsilon_1}\cdots
  M_{\varepsilon_k}\mathbf{v}_R=
  \mu^+([\varepsilon_1,\varepsilon_2,\ldots,\varepsilon_{k}]).
\end{multline*}
A similar computation shows that the measure is well defined by Kolmogorov's
consistency theorem.

Assuming that $\LL$ is primitive, the dynamical system
$(\K^+,\mu^+,\sigma^+)$ is strongly mixing and thus ergodic (see
\cite{Walters1982:ergodic_theory,Cornfeld_Fomin_Sinai1982:ergodic_theory}):
\begin{multline*}
  \lim_{n\to\infty}\mu^+
  \left([\varepsilon_1,\varepsilon_2,\ldots,\varepsilon_{k}]
    \cap (\sigma^+)^{-n}[\delta_1,\ldots,\delta_s]\right)\\
  = \lim_{n\to\infty}\lambda^{-n-s}\mathbf{v}_L^\trans M_{\varepsilon_1}\cdots
  M_{\varepsilon_k}M^{n-k}M_{\delta_1}\cdots M_{\delta_s}\mathbf{v}_R\\
  =  \lambda^{-k}\mathbf{v}_L^\trans M_{\varepsilon_1}\cdots
  M_{\varepsilon_k}\mathbf{v}_R \lambda^{-s}\mathbf{v}_L^\trans
  M_{\delta_1}\cdots M_{\delta_s}\mathbf{v}_R\\
  =  \mu^+([\varepsilon_1,\varepsilon_1,\ldots,\varepsilon_{k}])
  \mu^+([\delta_1,\ldots,\delta_s]).
\end{multline*}
The measures $\mu_I^+$ and $\mu^+|_{\K_I^+}$ (restriction to $\K_I^+$) are
equivalent. The measure $\mu^+$ is the unique shift invariant measure of
maximal entropy on $\K^+$, also called the Parry measure
\cite{Parry1966:symbolic_dynamics_transformations,
  Parry1964:intrinsic_markov_chains}. Maximality and uniqueness follow from
\cite[Theorems~6 and~7]{Parry1964:intrinsic_markov_chains} together with
\cite[Theorem~10]{Parry1964:intrinsic_markov_chains}.

Similarly, we define a measure on $\K$ by
\begin{equation}
  \label{eq:mudef}
  \mu([\varepsilon_1,\ldots,\varepsilon_k]_m)=
  \lambda^{-k}\mathbf{v}_L^\trans M_{\varepsilon_1}\cdots
    M_{\varepsilon_k}\mathbf{v}_R,
\end{equation}
where
\begin{equation*}
  [\varepsilon_1,\ldots,\varepsilon_k]_m=\left\{(x_n)_{n\in\mathbb{Z}}\in\K
    \,\middle|\,
  x_{m+1}=\varepsilon_1,\ldots,x_{m+k}=\varepsilon_k\right\}
\end{equation*}
for $m\in\mathbb{Z}$. The measure $\mu$ is $\sigma$-invariant by definition.

\section{Bernoulli convolutions}\label{sec:bern-conv}
In \cite{Erdoes39} Erd\H{o}s studied the distribution measure of the random
series
\begin{equation}\label{eq:random-sum}
  \sum_{k=1}^\infty\frac{X_k}{\beta^k},
\end{equation}
where $(X_k)_{k\in\mathbb{N}}$ is a sequence of i.i.d. random variables taking
the values $\pm1$ with equal probability $\frac12$, and
$\beta=\frac{1+\sqrt5}2$. He showed that the distribution is purely singular
continuous. Later \cite{Erdoes40} he extended this result for $\beta$ an
irrational Pisot number.  The Pisot property plays an important r\^ole in the
proof of singularity, as it allows to show that the Fourier transform of the
measure does not tend to $0$ along the sequence
$(\beta^k)_{k\in\mathbb{N}}$. This argument will be elaborated later.

In the meantime the set of $\beta>1$, for which the measure constructed as
above is singular continuous has been studied further. Solomyak
\cite{Solomyak1995:random_series_sum} could prove that for almost all
$\beta\in(1,2)$ the measure is absolutely continuous. This result was refined
by Shmerkin \cite{Shmerkin2014:exceptional_set_bernoulli}, who proved that the
exceptional set has Hausdorff dimension $0$. It is still open, whether Pisot
numbers are the only exceptions. For a survey on the development until the year
2000 we refer to \cite{Peres_Schlag_Solomyak2000:sixty_years_bernoulli}. For
more recent developments and results in this direction we refer to
\cite{Saglietti_Shmerkin_Solomyak2018:absolute_continuity,
  Shmerkin_Solomyak2016:absolute_continuity_self}.

A newer development in the study of Bernoulli convolutions was initiated with
the proof that ergodic invariant measures on the full shift
$\mathcal{A}^{\mathbb{N}}$ are projected to exact dimensional measures by
iterated function systems (see
\cite{Feng_Hu2009:dimension_theory_iterated}). More precisely, a measure $\nu$
is called exact dimensional with dimension $\alpha$, if
\begin{equation*}
  \lim_{r\to0}\frac{\log\nu((x-r,x+r))}{\log r}=\alpha
\end{equation*}
for $\nu$-almost all $x$. Of course, absolutely continuous measures on
$\mathbb{R}$ have dimension $1$, whereas the opposite implication is not
true. This allowed for the proof that the dimension of the distribution measure
of \eqref{eq:random-sum} has dimension $1$ for all transcendental
$\beta\in(1,2)$ (see \cite{Varju2019:dimension_bernoulli_convolutions}). For
further results in that direction we refer to
\cite{Breuillard_Varju2019:dimension_bernoulli_convolutions,
  Varju2019:absolute_continuity_bernoulli,Varju2018:recent_progress_bernoulli}.

\section{Generalised Erd\H{o}s measures}
\label{sec:gener-erdh-meas}
From now on we assume that the alphabet $\A$ is a subset of $\mathbb{Z}$.  For
an automaton $\auto$ and a Pisot number $\beta$ of degree $r\geq1$ we introduce
the maps
\begin{equation}
  \label{eq:phi-def}
  \begin{split}
    \phi_{\mathcal{L}}^+&:\mathcal{L}\to\mathbb{R}: (x_k)_{k=1}^n\mapsto
    \sum_{k=1}^n\frac{x_k}{\beta^k}\\
    \phi_I^+&:\K_I^+\to\mathbb{R}: (x_k)_{k\in\mathbb{N}}\mapsto\sum_{k=1}^\infty
    \frac{x_k}{\beta^k}\\
    \phi^+&:\K^+\to\mathbb{R}: (x_k)_{k\in\mathbb{N}}\mapsto\sum_{k=1}^\infty
    \frac{x_k}{\beta^k}.
  \end{split}
\end{equation}

The measures
\begin{equation}
  \label{eq:nu-def}
  \begin{split}
    \nu_I&=\left(\phi_I^+\right)_*(\mu_I^+)\\
    \nu&=\left(\phi^+\right)_*(\mu^+)
  \end{split}
\end{equation}
are analogues of the Erd\H{o}s measures studied in
\cite{Erdoes40,Erdoes39}. Here and throughout this paper we denote by
$f_*(\mu)$ the push-forward measure on $Y$ given by a map $f:X\to Y$ and a
measure $\mu$ on $X$; $f_*(\mu)(A)=\mu(f^{-1}(A))$. The properties of the
measures $\nu$ and $\nu_I$ will be the subject of the remaining part of this
paper. By definition, $\nu_I$ is absolutely continuous with respect to
$\nu$. Furthermore, by the definition of $\mu_I^+$, $\nu_I$ is given by
\begin{equation*}
  \nu_I=\lim_{n\to\infty}\frac1{\#\LL_n}\sum_{w\in\LL_n}
  \delta_{\phi_{\mathcal{L}}^+(w)},
\end{equation*}
where $\delta_x$ denotes a unit point mass in $x$.

\begin{theorem}\label{thm1}
  The measures $\nu_I$ and $\nu$ are pure in the sense that they are either
  absolutely continuous with respect to Lebesgue measure, purely singular
  continuous, or purely atomic. The last case can only occur, if the image
  $\phi^+(\K^+)$ is finite. The number of atoms is bounded by the number of
  vertices in $\auto$.
\end{theorem}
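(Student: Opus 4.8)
The plan is to exploit the graph-directed self-similar structure that the sofic/Markov setup imposes on the $\nu_v$, and to combine it with the fact that affine maps preserve every component of the Lebesgue decomposition. First I would attach to each vertex $v$ of $\auto$ a probability measure $\nu_v$ on $\R$, namely the push-forward under $\phi^+$ of the Parry Markov measure on one-sided paths issuing from $v$, where the edge $e\colon v\to v'$ with label $a$ is taken with probability $p_e=\lambda^{-1}(\mathbf{v}_R)_{v'}/(\mathbf{v}_R)_{v}$; this is stochastic precisely because $M\mathbf{v}_R=\lambda\mathbf{v}_R$. The identity $\phi^+(x)=S_{x_1}(\phi^+(\sigma^+x))$ with the contracting affine map $S_a\colon y\mapsto(y+a)/\beta$ then yields the self-similar system
\begin{equation*}
  \nu_v=\sum_{e\colon v\to v',\ \ell(e)=a}p_e\,(S_a)_*\nu_{v'},
\end{equation*}
while unwinding \eqref{eq:mu-def} expresses the measures of interest as nonnegative combinations $\nu=\sum_v(\mathbf{v}_L)_v(\mathbf{v}_R)_v\,\nu_v$ and $\nu_I=\sum_{v\in I}c_v\,\nu_v$ with $c_v\ge0$.

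Next I would decompose $\nu_v=\nu_v^{ac}+\nu_v^{sc}+\nu_v^{pp}$ at each vertex. Since every $S_a$ is an affine bijection of $\R$, push-forward under $S_a$ commutes with extracting the absolutely continuous, singular continuous and purely atomic parts; hence each of the three vectors $(\nu_v^{ac})_v$, $(\nu_v^{sc})_v$, $(\nu_v^{pp})_v$ satisfies the \emph{same} system. Passing to total masses turns the system into $\mathbf{m}=P\mathbf{m}$ for the stochastic matrix $P=(p_{v\to v'})$, and strong connectedness together with Perron--Frobenius forces the fixed space to be spanned by the all-ones vector. Thus each type carries a mass $\alpha,s,\pi$ that is the same at every vertex, with $\alpha+s+\pi=1$.

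To upgrade uniformity to purity I would invoke uniqueness of the invariant vector of probability measures: as the contraction ratios $1/\beta<1$ are uniform, the operator $(\eta_v)_v\mapsto\bigl(\sum_e p_e(S_a)_*\eta_{v'}\bigr)_v$ is a contraction in the vertexwise Wasserstein metric and so has a unique fixed point, namely $(\nu_v)_v$. If, say, $\alpha>0$, then $(\nu_v^{ac}/\alpha)_v$ is also such a fixed point, whence $\nu_v=\nu_v^{ac}/\alpha$ is purely absolutely continuous for every $v$; running the same argument in the other two cases shows at most one of $\alpha,s,\pi$ can be positive. Because $\nu$ and $\nu_I$ are nonnegative combinations of the $\nu_v$, which are now all pure of one common type, they are pure of that type as well.

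Finally, for the atomic case I would analyse the maximal atoms. Let $c>0$ be the largest atom mass occurring among the $\nu_v$ (attained, as each $\nu_v$ is a purely atomic probability measure and $V$ is finite) and put $A_v=\{y:\nu_v(\{y\})=c\}$. Evaluating the system at $y\in A_v$ gives $c=\sum_e p_e\,\nu_{v'}(\{\beta y-a\})$ with each term $\le c$ and $\sum_e p_e=1$, so $\nu_{v'}(\{\beta y-a\})=c$ along every outgoing edge, i.e. $A_v\subseteq S_a(A_{v'})$; cycling these inclusions through the strongly connected graph yields $A_v=S_a(A_{v'})$ with equal cardinalities. Then $\operatorname{diam}A_v=\beta^{-1}\operatorname{diam}A_{v'}$ along each edge, so around a cycle of length $L$ one gets $\operatorname{diam}A_v=\beta^{-L}\operatorname{diam}A_v$, forcing $\operatorname{diam}A_v=0$: each $A_v$ is a single point $\{y_v\}$. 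Subtracting $c\,\delta_{y_v}$ and reapplying the uniqueness of the invariant vector (the residual masses $1-c$ are again uniform) shows $c=1$, so $\nu_v=\delta_{y_v}$; hence $\phi^+(\K^+)=\{y_v:v\in V\}$ is finite and $\nu$ has at most $\#V$ atoms. I expect this last step, in concert with the uniqueness input, to be the main obstacle, since passing from the inclusions to the exact equations $A_v=S_a(A_{v'})$ and then eliminating all non-maximal atoms is exactly what makes the diameter contraction decisive.
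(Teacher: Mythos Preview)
Your argument is correct but follows a genuinely different route from the paper. The paper obtains purity of $\nu$ by a short Jessen--Wintner style argument: ergodicity of $(\K^+,\mu^+,\sigma^+)$ furnishes a $0$--$1$ law, so the $\mu^+$-measure of the preimage under $\phi^+$ of the atomic (respectively singular) support of $\nu$ is $0$ or $1$, which forces the Lebesgue decomposition to consist of a single term; purity of $\nu_I$ then follows from $\nu_I\ll\nu$. The bound on the number of atoms is established separately, in the proof of Theorem~\ref{thm2}, via a combinatorial dichotomy on cycles in $\auto$ (either some vertex carries two closed paths giving different $\phi^+$-values, and then $\phi^+(\K^+)$ is perfect, or every vertex $v$ determines a unique value $c(v)$ and $\phi^+(\K^+)=\{c(v):v\in V\}$). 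Your approach, built on the graph-directed IFS relations among the vertex measures $\nu_v$ and the Wasserstein-contraction uniqueness of the invariant vector $(\nu_v)_v$, is longer but yields extra structure: all $\nu_v$ are pure of the same type, and in the atomic case each $\nu_v$ is a Dirac mass, which delivers the atom bound and the finiteness of $\phi^+(\K^+)$ in a single stroke rather than through a separate combinatorial analysis. Conversely, the paper's ergodicity argument is more economical and applies verbatim to any ergodic shift-invariant measure on $\K^+$, not just the Parry measure, whereas your contraction/uniqueness step genuinely uses the Markov structure. One small point worth making explicit in your write-up: concluding that the \emph{topological} image $\phi^+(\K^+)$ (and not merely $\operatorname{supp}\nu$) equals $\{y_v:v\in V\}$ from $\nu_v=\delta_{y_v}$ uses that the Parry path measure has full support together with the continuity of $\phi^+$, so that a closed set of full measure is the whole path space.
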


\begin{proof}
    The Jessen-Wintner theorem
  \cite[Theorem~35]{Jessen_Wintner1935:distribution_functions_riemann} (for a
  more modern formulation see also
  \cite[Lemma~1.22 (ii)]{Elliott1979:probabilistic_number_theory_I})
  is concerned with  the distribution measure of a random series
  \begin{equation*}
    \sum_{n=1}^\infty X_n,
  \end{equation*}
  where $(X_n)_{n\in\mathbb{N}}$ is a sequence of independent discrete random
  variables and the series
  \begin{equation*}
    \sum_{n=1}^\infty \mathbb{E}(X_n)\text{ and }
    \sum_{n=1}^\infty \mathbb{V}(X_n)
  \end{equation*}
  converge. It states that this measure is either absolutely continuous with
  respect to Lebesgue measure, purely singular continuous, or purely atomic.

  For our purposes we need a more general version, which allows for some
  dependence between the random variables $X_n$.

   \begin{lemma}\label{lem-jessen-wintner}
     Let $\K^+\subset\A^{\N}$ be a shift invariant subset equipped with a shift
     invariant measure $\mu$ such that the shift is ergodic with respect to
     $\mu$. Let
     \begin{equation*}
       X=\sum_{n=1}^\infty X_n
     \end{equation*}
     be a series of random variables $X_n$, where $X_n$ only depends on the
     $n$-th coordinate of the argument. Also assume that the series converges
     $\mu$-almost everywhere. Then the distribution of $X$ is either
     purely discrete, or purely singular continuous, or absolutely continuous
     with respect to Lebesgue measure.
\end{lemma}
\begin{proof}[Proof of Lemma~\ref{lem-jessen-wintner}]
  The proof of this lemma is just the observation that the proof of the
  Jessen-Wintner theorem only uses the fact that the measure on the product
  space satisfies a $0$-$1$-law. The proof follows the lines of proof given for
  \cite[Lemma~1.22(ii)]{Elliott1979:probabilistic_number_theory_I}.
  Let $\nu=X_*(\mu)$ be the distribution measure
  of $X$. Then $\nu$ has a Lebesgue decomposition
  \begin{equation*}
    \nu=a_1\nu_1+a_2\nu_2+a_3\nu_3,
  \end{equation*}
  where $a_1+a_2+a_3=1$ and $\nu_i$ ($i=1,2,3$) are probability measures, where
  $\nu_1$ is purely atomic, $\nu_2$ is absolutely continuous, and $\nu_3$ is
  singular continuous. Let $D$ be the support of $\nu_1$. Then $X^{-1}(D)$ is a
  shift invariant subset of $\K^+$, which has measure $0$ or $1$ by
  ergodicity. Thus $a_1$ is either $0$ or $1$. Similarly, let $S$ be the set of
  Lebesgue measure $0$ that supports $\nu_3$. Then again $X^{-1}(S)$ is shift
  invariant, and again has measure $0$ or $1$. Thus the decomposition has
  exactly one term.
\end{proof}
The statement for $\nu_I$ follows from the absolute continuity of $\nu_I$
with respect to $\nu$. The assertion about the number of atoms will be proved
in the proof of Theorem~\ref{thm2}.
\end{proof}
\begin{lemma}\label{lem2}
  Let $P=\{x\in\mathbb{R}\mid \nu(\{x\}))>0\}$. Then $P$ is a
  finite subset of  $\mathbb{Q}(\beta)$.
\end{lemma}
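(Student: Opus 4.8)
The plan is to work with the family of state-indexed push-forwards rather than with $\nu$ directly. For each vertex $i$ let $\rho_i$ be the probability measure on the one-sided paths issuing from $i$, normalised by $(\mathbf v_R)_i$, so that $\rho_i([\eps_1,\dots,\eps_k]) = \lambda^{-k}(M_{\eps_1}\cdots M_{\eps_k}\mathbf v_R)_i/(\mathbf v_R)_i$; this is a probability measure because $M\mathbf v_R=\lambda\mathbf v_R$. Writing $\nu^{(i)}=(\phi^+)_*\rho_i$ one has $\nu=\sum_i(\mathbf v_L)_i(\mathbf v_R)_i\,\nu^{(i)}$, and decomposing a path by its first edge gives the self-similar identity $\nu^{(i)}=\sum_{(i,j)\in E,\ \ell(i,j)=a}\frac{(\mathbf v_R)_j}{\lambda(\mathbf v_R)_i}(S_a)_*\nu^{(j)}$, where $S_a(y)=(y+a)/\beta$. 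Since the Perron eigenvectors are strictly positive, all coefficients are positive and, for fixed $i$, sum to $1$.

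Reading off the mass of a point, with $g_i(x):=\nu^{(i)}(\{x\})$, gives $g_i(x)=\sum_{(i,j),\,a}\frac{(\mathbf v_R)_j}{\lambda(\mathbf v_R)_i}\,g_j(\beta x-a)$. This is the engine for the rationality statement. If $g_i(x)>0$ then, the coefficients being a probability vector, at least one successor satisfies $g_j(\beta x-a)\ge g_i(x)$. Iterating produces an infinite walk $(i_0,x_0),(i_1,x_1),\dots$ in the graph, with labels $a_1,a_2,\dots$, along which $x_{n}=\beta x_{n-1}-a_{n}$ and the masses $g_{i_n}(x_n)$ are non-decreasing. As each $\nu^{(k)}$ is a probability measure, the set $\{(k,y):g_k(y)\ge g_{i_0}(x_0)\}$ is finite, so the walk revisits a pair, say $(i_s,x_s)=(i_t,x_t)$ with $s<t$. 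Then $x_s=\beta^{t-s}x_s-\sum_{l=s+1}^{t}a_l\beta^{t-l}$, which solves to $x_s\in\mathbb{Q}(\beta)$; and since $x_0$ is obtained from $x_s$ by finitely many maps $S_a$ with $a\in\Z$, also $x_0\in\mathbb{Q}(\beta)$. Thus every atom lies in $\mathbb{Q}(\beta)$, and in fact has an eventually periodic $\beta$-expansion.

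Finiteness of $P$ is the harder half, and the step where the Pisot hypothesis must enter. First I would reduce to the purely atomic case: by Theorem~\ref{thm1} the presence of a single atom forces $\nu$ to be purely atomic, and then each $\nu^{(i)}$, being dominated by $\nu/((\mathbf v_L)_i(\mathbf v_R)_i)$, is purely atomic as well. For each $\eps>0$ only finitely many pairs $(i,x)$ have $g_i(x)\ge\eps$, so finiteness of $P$ is equivalent to a uniform positive lower bound on atom masses; the obstruction is to exclude a sequence of atoms whose masses tend to $0$.

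I would rule this out through the Pisot property applied to collisions. If $x$ is an atom then a positive-measure set of sequences maps to $x$, so there are distinct $\omega,\omega'$ with $\sum_k(\omega_k-\omega_k')\beta^{-k}=0$; writing $d_k=\omega_k-\omega_k'$ in the finite set $\A-\A$ and truncating, the elements $\sum_{k=1}^K d_k\beta^{K-k}\in\Z[\beta]$ stay bounded, and because every Galois conjugate of $\beta$ has modulus $<1$ their conjugates stay bounded too; hence these algebraic integers range over a finite set. Combined with the eventual periodicity obtained above, which confines each atom value to the form $(\beta^n-1)^{-1}\sum_l a_l\beta^{n-l}$ with controlled period length $n$, this should pin the atoms to a finite subset of $\mathbb{Q}(\beta)\cap[-\tfrac{A}{\beta-1},\tfrac{A}{\beta-1}]$, where $A=\max_{a\in\A}|a|$. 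The main obstacle is precisely making the period length (equivalently, the denominators $\beta^n-1$) uniformly bounded; I expect to control it by the size of the finite set $\{(k,y):g_k(y)\ge q^*\}$ of maximal-mass atoms, which is forward-invariant under the successor relation and whose cycles have length at most the number of vertices, this last point being where the announced bound on the number of atoms, to be completed in the proof of Theorem~\ref{thm2}, originates.
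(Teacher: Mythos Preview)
Your argument for $P\subset\mathbb{Q}(\beta)$ is correct and has a pleasant combinatorial flavour; it replaces the paper's one-line ergodicity step by a non-decreasing walk on state--value pairs. The paper simply observes that $\mu^+(A_x)>0$ forces $A_x\cap\sigma^{-n}(A_x)\neq\emptyset$ for some $n\geq1$, which immediately yields $(\beta^n-1)x\in\Z[\beta]$; your route reaches the same conclusion (with eventual periodicity as a bonus) by explicit iteration rather than abstract recurrence.

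The finiteness half, however, is not complete, and the missing idea is precisely what the paper supplies. You correctly identify that the obstacle is a uniform bound on the ``denominators'' $\beta^n-1$, but your proposed control via maximal-mass atoms and collision arguments does not close the gap: even in the purely atomic case, your walk argument produces for each atom $x$ its own period $n(x)$, and nothing in your sketch prevents $n(x)$ from being unbounded as $x$ ranges over $P$. The collision argument about $\sum_k(\omega_k-\omega_k')\beta^{-k}=0$ shows finiteness of certain partial sums in $\Z[\beta]$, but this constrains differences of preimages of a \emph{single} atom, not the set of atom values itself.

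The paper sidesteps this entirely by a second application of ergodicity, this time linking \emph{different} atoms. Fix one atom $x$; then $x\in\frac1N\Z[\beta]$ for some $N$. For any other atom $y$, ergodicity gives $A_x\cap\sigma^{-n}(A_y)\neq\emptyset$ for some $n$, whence $y=\beta^nx-\sum_{k=0}^{n-1}a_k\beta^k$ with $a_k\in\A$. This forces $y\in\frac1N\Z[\beta]$ with the \emph{same} $N$. Now $NP$ is a bounded set of algebraic integers, and the Pisot property bounds all conjugates uniformly (conjugating $y=\beta^nx-\sum a_k\beta^k$ and using $|\beta_q|<1$ gives $|y_q|\leq|x_q|+M/(1-|\beta_q|)$), so $NP$ is finite. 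The single fixed denominator is the step your proposal lacks; note also that the paper's argument never invokes purity, so the detour through Theorem~\ref{thm1} is unnecessary.
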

\begin{proof}
  For $x\in P$ set $A_x=(\phi^+)^{-1}(\{x\})$. Then $\phi^+(A_x)=\{x\}$ and
  \begin{equation*}
    \phi^+(\sigma^{-m}(A_x))\subseteq
    \left\{\beta^{-m}\left(x+\sum_{k=0}^{m-1}\ell(e_{m-k})\beta^k\right)
      \,\middle|\,
    e_1\ldots e_m\text{ a path in }\auto\right\}.
\end{equation*}
Since $\mu^+(A_x)>0$ there is an $n\geq1$ such that
$A_x\cap\sigma^{-n}(A_x)\neq\emptyset$. This implies that
\begin{equation*}
  x\in\left\{\beta^{-n}\left(x+\ell(e_n)+\cdots+
        \ell(e_1)\beta^{n-1}\right)\,\middle|\,
    e_1\ldots e_n\text{ a path in }\auto\right\},
\end{equation*}
from which it follows that $x\in\mathbb{Q}(\beta)$. Then there exists
$N\in\mathbb{N}$ such that $x\in\frac1N\mathbb{Z}[\beta]$.

Now fix $x\in P$ and thus $N$. Let $y\in P$. Then there exists an $n\geq1$
such that $A_x\cap\sigma^{-n}(A_y)\neq\emptyset$ again by ergodicity. This
implies that there exists a path $e_1\ldots e_n$ in $\auto$ such that
\begin{equation*}
  y=\beta^nx-\sum_{k=0}^{n-1}\ell(e_{n-k})\beta^k,
\end{equation*}
which shows that $y\in\frac1N\mathbb{Z}[\beta]$. Thus
$P\subset\frac1N\mathbb{Z}[\beta]$. Now by definition $P$ is bounded. Applying
the conjugations $\beta\mapsto\beta_q$ ($q=2,\ldots,r$) gives
\begin{equation*}
  |y_q|\leq |\beta_q|^n|x_q|+\sum_{k=0}^{m-1}|\ell(e_{m-k})||\beta_q|^k\leq
  |x_q|+\frac M{1-|\beta_q|},
\end{equation*}
if $|a|\leq M$ for all $a\in\A$. Thus $NP$ is a set of algebraic integers all
of whose conjugates are bounded. Thus $P$ is finite.
\end{proof}
\begin{lemma}\label{lem3}
  Let $\beta$ be a Pisot number.
  Then the set of words
  \begin{equation*}
    \LL_0=\left\{(x_1,x_2,\ldots,x_n)\in\A^*\,\middle|\,
      \sum_{k=1}^n\frac{x_k}{\beta^k}=0\right\}
  \end{equation*}
  is recognisable by a finite automaton. As a consequence the set
  \begin{equation}\label{eq:K0}
    \K_0^+=\left\{(x_1,x_2,\ldots)\in\A^{\mathbb{N}}\,\middle|\,
      \sum_{k=1}^\infty\frac{x_k}{\beta^k}=0\right\}
  \end{equation}
  is a space $\K_I^+$ for that automaton and an appropriate initial state $I$
  (labelled by $0$).
\end{lemma}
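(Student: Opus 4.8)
\section*{Proof proposal}

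The plan is to exhibit $\LL_0$ as the language of an automaton whose state, after a prefix $x_1\ldots x_j$ has been read, records the algebraic integer
\[
  s_j=\sum_{k=1}^j x_k\beta^{j-k}\in\Z[\beta],
\]
which obeys the recursion $s_j=\beta s_{j-1}+x_j$ with $s_0=0$. Since $\sum_{k=1}^n x_k\beta^{-k}=\beta^{-n}s_n$, a word lies in $\LL_0$ exactly when $s_n=0$. I would take the element $0$ as both the unique initial and the unique terminal state, insert an edge $s\xrightarrow{a}\beta s+a$ labelled by $a\in\A$ whenever both endpoints lie in a state set $S$ still to be specified, and then show that $S$ may be chosen finite.

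To bound the states I would invoke the Pisot property twice. Writing $M=\max_{a\in\A}|a|$ and applying the conjugations $\beta\mapsto\beta_q$ ($q=2,\ldots,r$) to the recursion gives $|s_j^{(q)}|\le|\beta_q|\,|s_{j-1}^{(q)}|+M$, so by induction $|s_j^{(q)}|\le M/(1-|\beta_q|)$ for \emph{every} reachable state, precisely because $|\beta_q|<1$. On the other hand, if a state is co-accessible to $0$, say $s_n=0$ for some continuation, then solving the recursion backwards yields $s_j=-\sum_{i=1}^{n-j}x_{j+i}\beta^{-i}$, whence $|s_j|\le M/(\beta-1)$ in the real embedding. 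Thus every state lying on a path from $0$ to $0$ has all of its archimedean conjugates bounded by explicit constants. Since $\beta$ is an algebraic integer of degree $r$, the ring $\Z[\beta]$ is a free $\Z$-module of rank $r$ whose Minkowski embedding $s\mapsto(s^{(1)},\ldots,s^{(r)})$ is a full lattice in $\R^r$ (the conjugates $\beta_1,\ldots,\beta_r$ being distinct), so a bounded box meets it in finitely many points; I would take $S$ to be this finite intersection. This finiteness is the crux of the argument, and it is exactly here that the Pisot hypothesis is indispensable.

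It then remains to verify that the finite automaton recognises $\LL_0$ and to deduce the statement on $\K_0^+$. For the finite words: if $s_n=0$, the backward computation shows $|s_j|\le M/(\beta-1)$ for all $j\le n$, so together with the conjugate bounds each intermediate $s_j$ lies in $S$ and the word is accepted; conversely every accepted word has $s_n=0$ by construction. For the infinite version, given $(x_k)_{k\in\N}$ of value $v=\sum_k x_k\beta^{-k}$, the identity $s_n=\beta^n v-\sum_{i\ge1}x_{n+i}\beta^{-i}$ shows that $v=0$ forces $|s_n|\le M/(\beta-1)$ for all $n$; combined with the conjugate bounds (valid for every prefix) this places each $s_n$ in $S$, so the word traces an infinite path from the state $0$ through $S$. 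Conversely, any such path keeps $s_n$ confined to the finite set $S$, hence bounded, so $v=\lim_n\beta^{-n}s_n=0$. Therefore $\K_0^+$ is exactly $\K_I^+$ for this automaton with $I=\{0\}$, the state labelled $0$.
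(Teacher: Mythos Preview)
Your proof is correct and follows essentially the same route as the paper. The paper's state after reading a prefix of length $j$ is the ``remaining tail'' $\sum_{k>j}x_k\beta^{j-k}$, which for words in $\LL_0$ equals $-s_j$ in your notation; the transition $x\mapsto\beta x-a$ there is your $s\mapsto\beta s+a$ up to this sign. The finiteness argument is identical: reachability from $0$ bounds the conjugates via $|\beta_q|<1$, co-accessibility to $0$ bounds the real embedding by $M/(\beta-1)$, and the lattice structure of $\Z[\beta]$ finishes. Your handling of the infinite-word statement is in fact more explicit than the paper's, which simply asserts the consequence; your observation that boundedness of $s_n$ forces $v=\lim_n\beta^{-n}s_n=0$ is the clean way to get the converse.
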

\begin{proof} Assume that $\LL_0\neq\emptyset$; otherwise the assertion is
  trivial.
  We define the set
  \begin{equation*}
    E=\left\{\sum_{k=1}^{m-s}\frac{x_{k+s}}{\beta^k}\middle|
    0\leq s<m, (x_1,\ldots,x_m)\in\LL_0\right\}.
  \end{equation*}
  This will be the set of states. Between two elements $x,y\in E$ there is a
  transition, if and only if
  \begin{equation*}
    a=\beta x-y\in\A;
  \end{equation*}
  this transition will then be marked with $a$. Since by definition of $E$
  every element of $E$ can be obtained from $0\in E$ by finitely many
  transitions $x\mapsto\beta x-a$ with $a\in\A$. This shows that every element
  of $E$ can be expressed in the form
  \begin{equation*}
    -\sum_{k=0}^mx_k\beta^k\quad\text{with }x_0,\ldots,x_m\in\A.
  \end{equation*}
  This shows that all elements of $E$ are algebraic integers. The original
  definition of $E$ shows that all elements of $E$ are bounded by $\frac
  M{\beta-1}$, if $|a|\leq M$ for all $a\in\A$. Furthermore, all conjugates of
  elements of $E$ are bounded by
  \begin{equation*}
    \left|-\sum_{k=0}^mx_k\beta_q^k\right|\leq \frac M{1-|\beta_q|}
  \end{equation*}
  for $q=2,\ldots,r$. Thus the set $E$ consists of algebraic integers all of
  whose conjugates are bounded; thus $E$ is finite. The set $E$ together with
  the transitions defined above define a finite automaton recognising $\LL_0$
  by taking $I=\{0\}$ as initial and terminal state.

  All infinite words recognised by this automaton correspond to sums as in the
  definition of $\K_0$.
\end{proof}
\begin{lemma}\label{lem4}
  Let $x\in\mathbb{R}$ be such that $\nu(\{x\}))>0$.
  Then the set $(\phi^+)^{-1}(\{x\})$ is open.
\end{lemma}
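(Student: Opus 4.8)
The plan is to deduce openness of $(\phi^+)^{-1}(\{x\})$ from three ingredients: the continuity of $\phi^+$, the purity established in Theorem~\ref{thm1}, and the observation that every nonempty open subset of $\K^+$ carries positive $\mu^+$-mass. The last point is the engine of the argument, so I would establish it first. A nonempty open set contains a nonempty cylinder $[\eps_1,\ldots,\eps_k]$, and such a cylinder is nonempty in $\K^+$ exactly when $\eps_1\ldots\eps_k$ labels a (finite) path in $G$; by strong connectedness every such path extends to an infinite one. Hence $M_{\eps_1}\cdots M_{\eps_k}$ is a nonnegative integer matrix with a strictly positive entry. Since the Perron--Frobenius vectors $\mathbf{v}_L,\mathbf{v}_R$ of the primitive matrix $M$ are strictly positive, formula~\eqref{eq:mu-def} gives $\mu^+([\eps_1,\ldots,\eps_k])=\lambda^{-k}\mathbf{v}_L^\trans M_{\eps_1}\cdots M_{\eps_k}\mathbf{v}_R>0$. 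Consequently any open subset of $\K^+$ of measure zero must be empty.

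Next I would exploit the hypothesis $\nu(\{x\})>0$. Since $\nu$ possesses an atom, it is neither absolutely continuous nor singular continuous, so the trichotomy of Theorem~\ref{thm1} forces $\nu$ to be purely atomic. By Lemma~\ref{lem2} the atom set $P$ is finite, and purity yields $\nu(P)=1$, i.e. $\nu(\R\setminus P)=0$. Put $U=(\phi^+)^{-1}(\R\setminus P)$. This set is open, because $\phi^+$ is continuous — two sequences agreeing on their first $m$ letters have images within $2M(\beta-1)^{-1}\beta^{-m}$ of one another, where $|a|\le M$ on $\A$ — and $\R\setminus P$ is open. Moreover $\mu^+(U)=\nu(\R\setminus P)=0$. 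By the previous paragraph $U=\emptyset$, so in fact $\phi^+(\K^+)\subseteq P$ is finite.

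Openness then follows formally. For each $p\in P$ the fiber $(\phi^+)^{-1}(\{p\})$ is closed by continuity of $\phi^+$, and since $\phi^+(\K^+)\subseteq P$ we obtain a finite partition $\K^+=\bigsqcup_{p\in P}(\phi^+)^{-1}(\{p\})$ into closed sets. Each fiber is therefore the complement of the union of the remaining finitely many closed fibers, hence open; in particular $(\phi^+)^{-1}(\{x\})$ is open, indeed clopen.

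The step I expect to be the crux is the elimination of the exceptional preimage $U$. A priori the image $\phi^+(\K^+)$ could strictly contain the atom set $P$, the difference being carried by a $\mu^+$-null yet possibly nonempty open set, and it is exactly the strict positivity of $\mathbf{v}_L,\mathbf{v}_R$ that rules this out. I would also take care that the route is not circular: Theorem~\ref{thm1} is invoked only for the trichotomy, which is proved there via the $0$--$1$ law, and Lemma~\ref{lem2} supplies finiteness of $P$ on its own, so deriving finiteness of the image here does not presuppose the bound on the number of atoms deferred to Theorem~\ref{thm2}.
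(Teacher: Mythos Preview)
Your argument is correct and follows the same line as the paper: both rest on Lemma~\ref{lem2} (finiteness of $P$) and the continuity of $\phi^+$. The paper's two-sentence proof simply asserts that $(\phi^+)^{-1}(\{x\})=(\phi^+)^{-1}(x-\varepsilon,x+\varepsilon)$ for small $\varepsilon$ and then invokes continuity; that equality is precisely the statement that $x$ is isolated in $\phi^+(\K^+)$, and you supply the justification the paper leaves implicit---purity (Theorem~\ref{thm1}) together with the full support of $\mu^+$ forces $\phi^+(\K^+)\subseteq P$. Your closing step (finite partition into closed fibers, hence each fiber clopen) is then equivalent to the paper's preimage-of-an-open-interval step. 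Your check against circularity is apt: only the trichotomy from Theorem~\ref{thm1} and the finiteness from Lemma~\ref{lem2} are used, not the atom-count bound whose proof invokes Lemma~\ref{lem4}.
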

\begin{proof}
  Since the set $P$ of atoms of $\nu$ is finite,
  $(\phi^+)^{-1}(\{x\})=(\phi^+)^{-1}((x-\varepsilon,x+\varepsilon))$ for small
  enough $\varepsilon>0$. The continuity of $\phi^+$ implies that
  $(\phi^+)^{-1}(\{x\})$ is open.
\end{proof}
\begin{theorem}\label{thm2}
  The set $\phi^+(\K^+)$ is either finite or perfect and thus uncountable. In
  the first case the measure $\phi^+_*(\mu^+)$ is atomic, in the second case it
  is continuous.
\end{theorem}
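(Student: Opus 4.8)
The plan is to analyse the compact set $J:=\phi^+(\K^+)\subset\R$ through the self-similar structure that the automaton induces on it, and then to read off the measure-theoretic dichotomy from Theorem~\ref{thm1}. First I would record that $J$ is nonempty and compact: $\K^+$ is a closed, hence compact, subset of $\A^{\N}$, the map $\phi^+$ is continuous, and $\K^+\neq\emptyset$ because $\LL$ is primitive. For each vertex $v$ of $\auto$ set
$J_v=\{\phi^+((\ell(e_k))_{k\in\N})\mid e_1e_2\cdots\text{ is a path with }\mathrm{i}(e_1)=v\}$;
each $J_v$ is compact and $J=\bigcup_v J_v$. Writing $f_e(t)=\beta^{-1}(\ell(e)+t)$ for an edge $e$, the identity $\phi^+((\ell(e_k))_k)=f_{e_1}(\phi^+((\ell(e_{k+1}))_k))$ yields the graph-directed relation $J_v=\bigcup_{\mathrm{i}(e)=v}f_e(J_{\mathrm{t}(e)})$, in which every $f_e$ is a similarity contracting by the factor $\beta^{-1}$.

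For the dichotomy, let $A_v$ denote the set of accumulation points of $J_v$. Since $J_v$ is a finite union of compact pieces $f_e(J_{\mathrm{t}(e)})$ and each $f_e$ is a homeomorphism onto its image, a pigeonhole argument gives the \emph{same} relation for the accumulation sets, $A_v=\bigcup_{\mathrm{i}(e)=v}f_e(A_{\mathrm{t}(e)})$. If every $A_v=\emptyset$, then each compact $J_v$ has no accumulation point and is therefore finite, so $J$ is finite. Otherwise some $A_{v_0}\neq\emptyset$; since $f_e(A_{\mathrm{t}(e)})\subseteq A_{\mathrm{i}(e)}$, non-emptiness of $A_{v_0}$ spreads backwards along a path from any vertex $v$ to $v_0$ (which exists by strong connectedness), so $A_v\neq\emptyset$ for all $v$. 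I would then show $J$ is perfect: given $x\in J_v$, expand $x=f_{e_1}\circ\cdots\circ f_{e_n}(x_n)$ along a path issuing from $v$ and replace $x_n\in J_{v_n}$ by any point $a_n\in A_{v_n}$; then $y_n:=f_{e_1}\circ\cdots\circ f_{e_n}(a_n)\in A_v$, while $|x-y_n|=\beta^{-n}|x_n-a_n|\le\beta^{-n}\,\mathrm{diam}\,J\to0$, so $x$ lies in the closed set of accumulation points of $J$. Hence $J$ has no isolated point; being nonempty, closed and perfect, it is uncountable.

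The measure statement then follows quickly. If $J$ is finite, $\nu=\phi^+_*(\mu^+)$ is carried by a finite set and is purely atomic. If $J$ is perfect it is infinite, so by Theorem~\ref{thm1} the measure $\nu$ is pure and cannot be purely atomic (that case forces $\phi^+(\K^+)$ finite); hence $\nu$ is continuous. To bound the number of atoms in the finite case, note that there every $x\in J$ is isolated, so $(\phi^+)^{-1}(\{x\})=(\phi^+)^{-1}(x-\eps,x+\eps)$ is open and nonempty, hence of positive $\mu^+$-measure because the Parry measure charges every cylinder; thus the atoms are exactly the points of $J$. Finally, a finite image makes $\phi^+$ locally constant on the compact space $\K^+$, so by compactness it depends only on the first $L$ coordinates for some fixed $L$. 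Prepending to two paths issuing from a vertex $w$ a common path of length $\ge L$ terminating at $w$ (available by primitivity) forces the two values of $\phi^+$ to agree, so each $J_w$ is a single point. Therefore $|J|\le$ the number of vertices of $\auto$, which is the asserted bound.

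The hard part is the perfectness step: transferring non-emptiness of a single accumulation set to all vertices via strong connectedness, and then using the uniform contraction factor $\beta^{-n}$ to approximate an arbitrary point of $J$ by genuine accumulation points. What makes this work is the exact relation $A_v=\bigcup_{\mathrm{i}(e)=v}f_e(A_{\mathrm{t}(e)})$ rather than a mere inclusion, so the technical crux is verifying it carefully—in particular that an accumulation point of the finite union is forced, by pigeonhole and the closedness of each piece, to be an accumulation point of one of the sets $f_e(J_{\mathrm{t}(e)})$.
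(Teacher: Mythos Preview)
Your argument is correct and takes a genuinely different route from the paper. The paper splits cases on whether some vertex $v$ carries two closed walks $e_1\cdots e_{L_1}$ and $f_1\cdots f_{L_2}$ with distinct periodic values $\frac{1}{1-\beta^{-L_i}}\sum_k\ell(\cdot_k)\beta^{-k}$; if so it builds, for each $x$, two explicit approximants $\xi_n,\eta_n$ (agreeing with $x$ on the first $n$ edges, then a short bridge to $v$, then the two periodic tails), at least one of which differs from $x$. In the complementary case every closed walk at $v$ has the same value $c(v)$, and the paper argues combinatorially that every infinite path from $v$ then evaluates to $c(v)$, so $J=\{c(v):v\in V\}$. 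Your approach via the graph-directed system $J_v=\bigcup_{\mathrm{i}(e)=v}f_e(J_{\mathrm{t}(e)})$ and the derived sets $A_v$ is more conceptual: the single relation $A_v=\bigcup_e f_e(A_{\mathrm{t}(e)})$, together with strong connectedness, gives the all-or-nothing dichotomy for the $A_v$ without ever isolating two specific loops, and your locally-constant argument for the bound $|J|\le|V|$ is shorter than the paper's path-surgery. What the paper's version buys is the explicit, checkable criterion recorded as Theorem~\ref{thm8}.

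One logical point needs attention. In the perfect case you conclude continuity by invoking the clause of Theorem~\ref{thm1} that a purely atomic $\nu$ forces $\phi^+(\K^+)$ finite; but the paper defers the proof of exactly that clause to Theorem~\ref{thm2}, so as written this is circular. The repair is immediate with what you already have: you showed $A_v=J_v$ for every $v$, so the $\phi^+$-image of any cylinder is perfect and hence uncountable; Lemma~\ref{lem4} says the preimage of an atom is open and therefore contains a cylinder, a contradiction. Alternatively, since $\mu^+$ charges every nonempty cylinder, $\nu$ has full support $J$; combined with purity (the Jessen--Wintner part of Theorem~\ref{thm1}) and Lemma~\ref{lem2}, a single atom would make $J=\mathrm{supp}(\nu)$ finite.
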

\begin{proof}
  The set $\phi^+(\K^+)$ is compact as the continuous image of a compact set.
  
  Assume that there exists a vertex $v\in V$ and two paths $e_1e_2\ldots
  e_{L_1}$ and $f_1f_2\ldots f_{L_2}$ both connecting $v$ to itself such that
  \begin{equation}\label{eq:different}
    \frac1{1-\beta^{-L_1}}\sum_{k=1}^{L_1}\frac{\ell(e_k)}{\beta^k}\neq
    \frac1{1-\beta^{-L_2}}\sum_{k=1}^{L_2}\frac{\ell(f_k)}{\beta^k}.
  \end{equation}
  Then we will show that the set $\phi^+(\K^+)$ is perfect. For this purpose we
  choose $x\in\phi^+(\K^+)$ and show that there is a sequence
  $(x_n)_{n\in\mathbb{N}}$ of points in $\phi^+(\K^+)$ with
  $x=\lim_{n\to\infty}x_n$ and $x_n\neq x$ for all $n$.

  Let $x=\phi^+((\ell(a_1),\ell(a_2),\ldots))$. For $n\in\mathbb{N}$ we set
  \begin{align*}
    \xi_n&=\phi^+(\ell(a_1),\ell(a_2),\ldots,\ell(a_n),\ell(b_1),\ldots,
    \ell(b_k),\overline{\ell(e_1),\ldots,\ell(e_{L_1})})\\
    \eta_n&=\phi^+(\ell(a_1),\ell(a_2),\ldots,\ell(a_n),\ell(b_1),\ldots,
    \ell(b_k),\overline{\ell(f_1),\ldots,\ell(f_{L_2})}),
  \end{align*}
  where $b_1,\ldots,b_k\in E$ are chosen so that
  $\ell(a_1)\ldots\ell(a_n)\ell(b_1)\ldots\ell(b_k)\in\LL$ and
  $\mathrm{t}(b_k)=v$. There exists an integer $k\leq\#V$ with this property
  for every $n$. By \eqref{eq:different} $\xi_n\neq\eta_n$, and thus at least
  one of these two values is different from $x$. We take $x_n$ to be this
  value. Then $\lim_{n\to\infty}x_n=x$ showing that $x$ is not isolated.

  By Lemma~\ref{lem4} the preimage of any atom of the measure $\phi^+_*(\mu^+)$
  would contain a cylinder set. This would contradict the fact that we have
  just proved that the images of all cylinder sets are uncountable. This shows
  that $\nu$ is continuous in this case.

  Assume on the contrary that for all $v\in V$ there exists a value
  $c(v)\in\mathbb{R}$ such that for all paths $e_1e_2\ldots e_n$ connecting $v$
  to itself
  \begin{equation}
    \label{eq:cv}
    c(v)=\frac1{1-\beta^{-n}}\sum_{k=1}^n\frac{\ell(e_k)}{\beta^k}.
  \end{equation}
  In this case every infinite path $e_1e_2\ldots$ starting at $v$ yields the
  value $c(v)$ for $\phi^+$: assume that $j$ is chosen to be the minimal index
  so that $w=\mathrm{t}(e_j)$ is visited infinitely often by the path. If
  $j=1$, the value of $\phi^+$ given by the path is $c(v)$ by definition. For
  $j>1$ we decompose the path
  \begin{equation*}
    e_1\ldots e_{k_1}e_{k_1+1}\ldots e_{k_2}e_{k_2+1}\ldots e_{k_3}\ldots,
  \end{equation*}
  where $k_1=j$ and $\mathrm{t}(e_{k_m})=w$. Then by our assumption
  \eqref{eq:cv} every path $e_{k_m+1}\ldots e_{k_{m+1}}$ can be replaced by a
  path $f_1\ldots f_sf_{s+1}\ldots f_{s+q}$ with $\mathrm{i}(f_1)=w$,
  $\mathrm{t}(f_s)=\mathrm{i}(f_{s+1})=v$, and $\mathrm{t}(f_{s+q})=w$ without
  changing the value of $\phi^+$. The new path visits $v$ infinitely often, and
  thus assigns the value $c(v)$ to $\phi^+$. Thus $\phi^+$ only takes the
  values $\{c(v)\mid v\in V\}$. Each of these values is assigned a positive
  mass. This proves that the number of atoms of $\phi^+_*(\mu)$ is bounded by
  the number of vertices of $\auto$. This shows the last assertion of
  Theorem~\ref{thm1}.
\end{proof}
The following result is a consequence of the proof of Theorem~\ref{thm2}.
\begin{theorem}\label{thm8}
  The set $\phi^+(\K^+)$ is finite, if and only if for every vertex $v\in V$
  there exists a value $c(v)\in\R$ such that for all paths $e_1e_2\ldots e_n$
  connecting $v$ to itself \eqref{eq:cv} holds.  
\end{theorem}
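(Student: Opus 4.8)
The plan is to read this statement off directly from the dichotomy already established in the proof of Theorem~\ref{thm2}, since the two conditions analysed there are exact logical negations of one another. The only preliminary point I would record explicitly is the elementary observation underlying both directions: for a return path $e_1\ldots e_n$ at a vertex $v$, the quantity appearing in \eqref{eq:cv} is precisely the value of $\phi^+$ on the periodic word $\overline{\ell(e_1)\ldots\ell(e_n)}$, because
\[
  \phi^+(\overline{\ell(e_1)\ldots\ell(e_n)})
  =\sum_{j\ge0}\beta^{-jn}\sum_{k=1}^n\frac{\ell(e_k)}{\beta^k}
  =\frac1{1-\beta^{-n}}\sum_{k=1}^n\frac{\ell(e_k)}{\beta^k}.
\]
Thus condition \eqref{eq:cv} at a vertex $v$ says exactly that all return paths at $v$ produce the same $\phi^+$-value, and its failure at $v$ is exactly the existence of two return paths at $v$ satisfying \eqref{eq:different}.

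For the implication ($\Leftarrow$), I would assume that \eqref{eq:cv} holds at every vertex. This is precisely the hypothesis treated in the second half of the proof of Theorem~\ref{thm2}, which shows that every infinite path starting at $v$ yields the value $c(v)$, so that $\phi^+$ takes only the finitely many values $\{c(v)\mid v\in V\}$. Hence $\phi^+(\K^+)$ is finite.

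For the implication ($\Rightarrow$), I would argue by contraposition. If \eqref{eq:cv} fails at some vertex $v$, then by the observation above there are two return paths at $v$ satisfying \eqref{eq:different}, which is exactly the hypothesis treated in the first half of the proof of Theorem~\ref{thm2}; that argument shows $\phi^+(\K^+)$ is perfect. Since the graph underlying $\auto$ is strongly connected, $\K^+\neq\emptyset$, so $\phi^+(\K^+)$ is a nonempty perfect subset of $\R$ and therefore uncountable, in particular infinite. Consequently, finiteness of $\phi^+(\K^+)$ forces \eqref{eq:cv} at every vertex.

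I do not expect a genuine obstacle here, as the whole content is already contained in Theorem~\ref{thm2}; the statement is essentially a repackaging of that proof into an equivalence. The only points worth a word of care are that the two cases treated in the proof of Theorem~\ref{thm2} are genuinely mutually exclusive and jointly exhaustive (guaranteed by the negation computed above), and that a nonempty perfect subset of $\R$ is infinite. Both are immediate, so the argument is short.
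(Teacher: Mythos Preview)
Your proposal is correct and matches the paper's approach exactly: the paper gives no separate proof of Theorem~\ref{thm8} but simply records it as ``a consequence of the proof of Theorem~\ref{thm2}'', which is precisely the dichotomy you unpack. Your added remark identifying the quantity in \eqref{eq:cv} with $\phi^+$ of the periodic word $\overline{\ell(e_1)\ldots\ell(e_n)}$ makes the link between \eqref{eq:cv} and \eqref{eq:different} explicit, which is a helpful clarification but not an essential addition.
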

\begin{remark}
  Notice that the measure $\phi_*^+(\mu^+)$ can only be absolutely continuous,
  if $\beta\leq\lambda$ (recall that $\lambda$ is the Perron-Frobenius
  eigenvalue associated to the automaton $\auto$). Otherwise the set
  $\phi^+(\K^+)$ has Hausdorff dimension $\leq \frac{\log\lambda}{\log\beta}<1$
  and cannot support an absolutely continuous measure.
\end{remark}
\section{Fourier transforms and matrix products}
\label{sec:four-transf-matr}
The Fourier transforms of the measures $\nu$ and $\nu_I$ are given by
\begin{equation}
  \label{eq:fourier}
  \begin{split}
    \nuhat_I(t)=\int\limits_{-\infty}^\infty e^{-2\pi ixt}\,d\nu_I(x)\\
    \nuhat(t)=\int\limits_{-\infty}^\infty e^{-2\pi ixt}\,d\nu(x).
  \end{split}
\end{equation}

In order to derive expressions for $\nuhat_I$ and $\nuhat$, we introduce the
weighted transition matrix for $\phi^+$ and the underlying automaton $\auto$
\begin{equation}
  \label{eq:W-def}
  W(t)=\frac1\lambda\sum_{a\in\A}e(-at)M_a,
\end{equation}
where we use the notation $e(x)=e^{2\pi ix}$.

\begin{proposition}
  The Fourier transforms of the measures $\nu$ and $\nu_I$ can be expressed as
  \begin{equation}
    \label{eq:nuhat}
    \nuhat(t)=\mathbf{v}_L^\trans\prod_{n=1}^\infty W(\beta^{-n}t)\mathbf{v}_R
  \end{equation}
  and
  \begin{equation}
    \label{eq:nuhatI}
    \nuhat_I(t)=\frac1{\mathbf{v}_I^\trans\mathbf{v}_R}
    \mathbf{v}_I^\trans\prod_{n=1}^\infty W(\beta^{-n}t)\mathbf{v}_R,
  \end{equation}
  where the infinite matrix product is interpreted so that the factors are
  ordered from left to right.
\end{proposition}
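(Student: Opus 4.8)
The plan is to compute $\nuhat(t)$ directly from its definition by unfolding the push-forward and exploiting the product structure of the exponential together with the cylinder formula \eqref{eq:mu-def}; the expression for $\nuhat_I$ will follow from the identical computation with \eqref{eq:muI-limit} in place of \eqref{eq:mu-def}. First I would rewrite the Fourier transform as an integral over the shift space. Since $\nu=(\phi^+)_*(\mu^+)$, the change-of-variables formula for push-forwards gives
\begin{equation*}
  \nuhat(t)=\int_{\K^+}e\left(-t\,\phi^+(\mathbf{x})\right)\,d\mu^+(\mathbf{x})
  =\int_{\K^+}\prod_{k=1}^\infty e\left(-\frac{t\,x_k}{\beta^k}\right)\,d\mu^+(\mathbf{x}),
\end{equation*}
where I have written $\phi^+(\mathbf{x})=\sum_{k\geq1}x_k\beta^{-k}$ and used $e(x+y)=e(x)e(y)$.

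Next I would truncate. For $N\in\N$ put $f_N(\mathbf{x})=\prod_{k=1}^N e(-t\,x_k\beta^{-k})$, which depends only on the first $N$ coordinates and is therefore constant on each cylinder of length $N$. Since $\mu^+$ is a probability measure, $|f_N|\leq1$, and $f_N\to e(-t\,\phi^+)$ pointwise, dominated convergence reduces the claim to evaluating $\lim_{N\to\infty}\int f_N\,d\mu^+$. Splitting the integral over cylinders and inserting \eqref{eq:mu-def} gives
\begin{equation*}
  \int_{\K^+}f_N\,d\mu^+
  =\lambda^{-N}\sum_{\eps_1,\ldots,\eps_N\in\A}
  \left(\prod_{k=1}^N e\left(-\frac{t\,\eps_k}{\beta^k}\right)\right)
  \mathbf{v}_L^\trans M_{\eps_1}\cdots M_{\eps_N}\mathbf{v}_R,
\end{equation*}
where extending the sum to all of $\A$ rather than only to valid words costs nothing, the forbidden transitions being annihilated by the zero entries of the $M_a$.

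The key algebraic step would then be to distribute the summation across the factors. Absorbing the scalar $\lambda^{-1}e(-\eps_k\beta^{-k}t)$ into the $k$-th matrix and noting that the indices $\eps_k$ run independently, the sum factorises into a matrix product whose $k$-th factor is exactly $\sum_{a\in\A}\lambda^{-1}e(-a\beta^{-k}t)M_a=W(\beta^{-k}t)$. Hence $\int f_N\,d\mu^+=\mathbf{v}_L^\trans\prod_{k=1}^N W(\beta^{-k}t)\mathbf{v}_R$, and letting $N\to\infty$ yields \eqref{eq:nuhat}; repeating the computation with \eqref{eq:muI-limit} produces \eqref{eq:nuhatI} with the normalising prefactor $(\mathbf{v}_I^\trans\mathbf{v}_R)^{-1}$.

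The main obstacle I expect is justifying that the infinite matrix product on the right-hand side converges, so that the statement is genuinely well-posed and not merely shorthand for the limit of the sandwiched scalars (which dominated convergence already controls). To address this I would use that $W(\beta^{-k}t)\to W(0)=\lambda^{-1}M$ geometrically fast, since $\|W(s)-W(0)\|=O(|s|)$ and $\sum_k\beta^{-k}|t|<\infty$, together with the spectral gap of $\lambda^{-1}M$ supplied by \eqref{eq:perron}: its dominant eigenvalue is simple and equal to $1$ while all others lie strictly inside the unit disc. This forces the tail products to contract toward the rank-one projection $\mathbf{v}_R\mathbf{v}_L^\trans$, giving convergence of the full product.
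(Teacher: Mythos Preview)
Your argument is correct and follows essentially the same line as the paper's: truncate $\phi^+$ to the first $N$ digits, recognise that the Fourier transform of the truncated push-forward equals $\mathbf{v}_L^\trans\prod_{k=1}^N W(\beta^{-k}t)\mathbf{v}_R$, and pass to the limit. The only cosmetic difference is that the paper phrases the limiting step via weak convergence of the measures $\nu_n=(\phi_n^+)_*(\mu^+)\rightharpoonup\nu$, whereas you invoke dominated convergence on the integrand over $\K^+$; these are equivalent here, and your version has the merit of making the cylinder-sum factorisation and the convergence of the matrix product explicit, points the paper leaves to the reader.
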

\begin{proof}
  We set
  \begin{equation*}
    \phi_n^+((x_k)_{k\in\N})=\sum_{k=1}^n\frac{x_k}{\beta^k}.
  \end{equation*}
  Then $\lim_{n\to\infty}\phi_n^+((x_k)_{k\in\N})=\phi^+((x_k)_{k\in\N})$ holds
  uniformly on $\K^+$. We set $\nu_n=(\phi_n^+)_*(\mu^+)$ and observe that
  $\nu_n\rightharpoonup\nu$. Then
  \begin{equation*}
    \nuhat_n(t)=\mathbf{v}_L^\trans\prod_{k=1}^n W(\beta^{-k}t)\mathbf{v}_R.
  \end{equation*}
  The limit relation $\lim_{n\to\infty}\nuhat_n=\nuhat$ and equation
  \eqref{eq:nuhat} then follow by weak convergence. A similar reasoning gives
  \eqref{eq:nuhatI}.
\end{proof}
\begin{remark}
  As pointed out in Section~\ref{sec:bern-conv} Erd\H{o}s
  \cite{Erdoes40,Erdoes39} proved the singularity of the distribution measure
  $\nu$ of the random series
  \begin{equation*}
    \sum_{n=1}^\infty\frac{X_n}{\beta^n},
  \end{equation*}
  by showing that $(\nuhat(\beta^k))_{k\in\mathbb{N}}$ does not tend to $0$. Of
  course the fact that $\lim_{|t|\to\infty}\nuhat(t)=0$ does not suffice in
  general to prove absolute continuity, as there are singular measures, so
  called Rajchman measures, whose Fourier transform vanishes at $\infty$ (see
  \cite{Rajchman1929:une_classe_series}).
\end{remark}
Using the Pisot property of $\beta$ we define the map
\begin{equation}
  \label{eq:phidef}
    \Phi:\K\to\T^r,
    (x_k)_{k\in\mathbb{Z}}\mapsto\left(\sum_{k=-\infty}^\infty
      x_k\beta^{-k+m}\pmod1 \right)_{m=0}^{r-1},
  \end{equation}
  where we use the notation $\T=\R/\Z$.
Notice that the series for $k\leq0$ converges $\pmod1$ by the fact that
\begin{equation*}
  \beta^m+\beta_2^m+\cdots+\beta_r^m\in\mathbb{Z}
\end{equation*}
and $|\beta_2|,\ldots,|\beta_r|<1$.
\begin{remark}\label{rem:Sidorov}
  A similar map was studied in \cite{Sidorov2001:bijective_general_arithmetic,
    Sidorov_Vershik1998:ergodic_properties_erdoes}. In
  \cite{Sidorov_Vershik1998:ergodic_properties_erdoes} this map was used to
  give a dynamic proof of the singularity of the Erd\H{o}s measure introduced
  in \cite{Erdoes39}. In \cite{Sidorov2001:bijective_general_arithmetic}
  conditions were given under which this map between the two-sided
  $\beta$-shift $X_\beta$ and $\mathbb{T}^d$ is almost bijective. In the
  present situation, where $\beta$ and the underlying language $\mathcal{L}$
  are not necessarily related, nothing can be said about (almost) bijectivity
  of this map.
\end{remark}

\begin{theorem}
  Let $z=m_0+m_1\beta+\cdots+m_{r-1}\beta^{r-1}\in\mathbb{Z}[\beta]$ for
  $\beta$ a Pisot number of degree $r$ and $\nu$ be the measure given by
  \eqref{eq:nu-def}. Then the limit
  \begin{equation}
    \label{eq:psihat-def}
    \psihat(m_0,\ldots,m_{r-1})=\lim_{k\to\infty}\nuhat(z\beta^k)
  \end{equation}
  exists. These values are the Fourier coefficients of the measure
  $\psi=\Phi_*(\mu)$ on $\T^r$.
\end{theorem}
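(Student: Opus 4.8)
The plan is to show that both sides of the asserted identity equal the single integral $\int_\K e\big(-z\sum_{k\in\Z}x_k\beta^{-k}\big)\,d\mu(x)$ over the two-sided shift, where the backward part of the series is read $\pmod 1$ exactly as in the definition \eqref{eq:phidef} of $\Phi$. For the right-hand side I would unwind $\psi=\Phi_*(\mu)$: the $\mathbf m$-th Fourier coefficient of $\psi$ is $\int_{\T^r}e(-\langle\mathbf m,\theta\rangle)\,d\psi(\theta)=\int_\K e(-\langle\mathbf m,\Phi(x)\rangle)\,d\mu(x)$ by the change-of-variables formula for push-forwards. Writing out $\Phi$ and interchanging the finite sum over the $r$ coordinates with the mod-$1$-convergent sum over $k$ gives $\langle\mathbf m,\Phi(x)\rangle=\sum_{k\in\Z}x_k\beta^{-k}\big(\sum_{j=0}^{r-1}m_j\beta^{j}\big)=z\sum_{k\in\Z}x_k\beta^{-k}\pmod 1$, so that the right-hand side is indeed the common integral above.

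For the left-hand side I would first pass from $\K^+$ to $\K$. Let $\pi:\K\to\K^+$ delete the past coordinates; comparing the cylinder measures in \eqref{eq:mudef} and \eqref{eq:mu-def} shows $\pi_*\mu=\mu^+$, so that, by the definitions \eqref{eq:fourier}, \eqref{eq:nu-def}, $\nuhat(z\beta^K)=\int_{\K^+}e(-z\beta^K\phi^+)\,d\mu^+=\int_\K e\big(-z\sum_{k\ge1}x_k\beta^{K-k}\big)\,d\mu(x)$. The decisive step is to exploit the $\sigma$-invariance of $\mu$ on the \emph{two-sided} shift: applying the measure-preserving bijection $\sigma^{-K}$ (which replaces $x_k$ by $x_{k-K}$) and reindexing $\ell=k-K$ turns this into $\nuhat(z\beta^K)=\int_\K e\big(-z\sum_{\ell=1-K}^\infty x_\ell\beta^{-\ell}\big)\,d\mu(x)$. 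This is the heart of the matter: multiplying the argument by $\beta^K$, which in the one-sided picture merely pushes mass out to infinity, is converted by shift-invariance into the harmless act of exposing $K-1$ further past coordinates, precisely the coordinates that the two-sided map $\Phi$ sees.

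Finally I would let $K\to\infty$. By the very definition of $\Phi$ and the Pisot property recorded after \eqref{eq:phidef}, the partial sums $z\sum_{\ell=1-K}^\infty x_\ell\beta^{-\ell}$ converge in $\T$ to $z\sum_{\ell\in\Z}x_\ell\beta^{-\ell}$ for every $x\in\K$; hence the integrands converge pointwise and are bounded by $1$, and the bounded convergence theorem on the probability space $(\K,\mu)$ yields both the existence of $\lim_{K\to\infty}\nuhat(z\beta^K)$ and its equality with the common integral, which we already identified as $\psihat(\mathbf m)$. The only points demanding care are this final convergence and the interchange of summation used for the right-hand side; both rest solely on the mod-$1$ convergence of the backward series, which holds because for a partial tail $z\sum_{i=K}^{N}x_{-i}\beta^{i}\in\Z[\beta]$ the trace is an integer, so it is congruent $\pmod 1$ to $-\sum_{q=2}^{r}z_q\sum_{i=K}^{N}x_{-i}\beta_q^{i}$, and the latter tends to $0$ as $K\to\infty$ uniformly in $x$ since $|\beta_2|,\dots,|\beta_r|<1$. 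This uniform control simultaneously justifies the interchange and makes the passage to the limit routine.
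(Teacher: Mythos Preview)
Your argument is correct and follows essentially the same route as the paper's proof: the paper packages the computation by introducing the truncated maps $\Phi_n((x_k)_{k\in\Z})=\bigl(\sum_{k=-n}^\infty x_k\beta^{-k+m}\bmod1\bigr)_{m=0}^{r-1}$, observes $\psihat_n(\mathbf m)=\nuhat(z\beta^n)$ (which is exactly your shift-invariance step), and then passes to the limit via $\Phi_n\to\Phi$ uniformly and weak convergence $\psi_n\rightharpoonup\psi$, in place of your bounded-convergence argument. Your version is somewhat more explicit about the use of $\sigma$-invariance on the two-sided shift and about why the backward series converges $\pmod 1$, but the substance is the same.
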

\begin{proof}
  We define the maps
  \begin{equation*}
    \Phi_n((x_k)_{k\in\Z})=\left(\sum_{k=-n}^\infty x_k\beta^{-k+m}
      \pmod1\right)_{m=0}^{r-1}.
  \end{equation*}
  Then $(\Phi_n)_{n\in\N}$ converges to $\Phi$ uniformly on $\K$. Let
  $\psi_n=(\Phi_n)_*(\mu)$. Then $\psi_n\rightharpoonup\psi$ and
  \begin{equation*}
    \psihat_n(m_0,\ldots,m_{r-1})=\nuhat(z\beta^n).
  \end{equation*}
  The limit relation \eqref{eq:psihat-def} then follows by weak convergence and
  the fact that $(\beta^n\mod1)\to0$.
\end{proof}
\begin{remark}
  The shift on $\K$ is conjugate via $\Phi$ to the hyperbolic toral
  endomorphism
  \begin{equation*}
    B:\T^r\to\T^r, \begin{pmatrix}
      t_0\\t_1\\\vdots\\t_{r-1}
    \end{pmatrix}
    \mapsto
    \begin{pmatrix}
      t_1\\\vdots\\t_{r-1}\\a_0t_0+\cdots+a_{r-1}t_{r-1}\pmod1
    \end{pmatrix},
  \end{equation*}
  where
  \begin{equation*}
    \beta^r=a_{r-1}\beta^{r-1}+\cdots+a_1\beta+a_0
  \end{equation*}
  is the minimal equation of $\beta$. The measure $\psi$ is then a
  $B$-invariant measure on $\T^r$, and $B$ is ergodic with respect to $\psi$.
\end{remark}
\begin{theorem}\label{thm:Rajchman}
  The measure $\nu$ given by \eqref{eq:nu-def} is absolutely continuous, if and
  only if for all $z\in\mathbb{Z}[\beta]\setminus\{0\}$
  \begin{equation}\label{eq:limnuhat0}
    \lim_{k\to\infty}\nuhat(z\beta^k)=0.
  \end{equation}
\end{theorem}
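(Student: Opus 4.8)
The plan is to prove the two implications separately, treating the forward direction as essentially immediate and concentrating on the converse, which rests on translating the hypothesis into a statement about the torus measure $\psi$ and then reading off absolute continuity of $\nu$ from the hyperbolic splitting of $B$. For the easy direction, suppose $\nu$ is absolutely continuous, $d\nu=f\,dx$ with $f\in L^1(\R)$. Then the Riemann--Lebesgue lemma gives $\nuhat(t)\to0$ as $|t|\to\infty$, and since $z\in\Z[\beta]\subset\R$ with $z\neq0$ and $\beta>1$ we have $|z\beta^k|\to\infty$; hence $\nuhat(z\beta^k)\to0$, which is \eqref{eq:limnuhat0}.

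For the converse I would first reduce the hypothesis to a clean statement on $\T^r$. By the theorem identifying the limits $\lim_{k\to\infty}\nuhat(z\beta^k)$ as the Fourier coefficients $\psihat(m_0,\ldots,m_{r-1})$ of $\psi=\Phi_*(\mu)$, assumption \eqref{eq:limnuhat0} says that all Fourier coefficients of $\psi$ indexed by $(m_0,\ldots,m_{r-1})\neq0$ vanish, while the coefficient at $0$ is the total mass $1$. Thus $\psi$ is the normalised Haar measure on $\T^r$, and everything reduces to showing that $\psi$ being Haar forces $\nu$ to be absolutely continuous.

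The geometric heart of the argument is to split $\Phi$ along the hyperbolic splitting of $B$. The $m$-th coordinate of $\Phi$ is $\sum_{k\ge1}x_k\beta^{m-k}+\sum_{j\ge0}x_{-j}\beta^{m+j}$; the first sum equals $\beta^m\phi^+((x_k)_{k\ge1})$, so the \emph{future} contributes $\phi^+\cdot u$ with $u=(1,\beta,\ldots,\beta^{r-1})$ the expanding eigenvector of $B$. Using $\sum_{q=1}^r\beta_q^n\in\Z$ together with $|\beta_q|<1$ for $q\ge2$, one checks that modulo $\Z^r$ the \emph{past} sum $\bigl(\sum_{j\ge0}x_{-j}\beta^j\bigr)u$ equals $-\sum_{q=2}^r\bigl(\sum_{j\ge0}x_{-j}\beta_q^j\bigr)u_q$, a convergent vector lying in the contracting subspace $E^s$ spanned by $u_q=(1,\beta_q,\ldots,\beta_q^{r-1})$. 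Hence
\[
  \Phi((x_k)_{k\in\Z})\equiv \phi^+((x_k)_{k\ge1})\,u+\pi^s\bigl((x_k)_{k\le0}\bigr)\pmod{\Z^r},
\]
so varying the future moves a point along the unstable leaf (by $\phi^+$) while the past fixes the transverse stable coordinate.

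I would then disintegrate. Fixing the past fixes the stable coordinate and the state $v=\mathrm{i}(e_1)$ at the origin; by the Markov property of the Parry measure $\mu$ the conditional law of the future depends only on $v$, and that of $\phi^+$ is $\nu_v=\phi^+_*(\mu^+\mid\text{start at }v)$. Under $\Phi$ this conditional becomes $\nu_v$ transported by the affine parametrisation $t\mapsto t\,u+\text{offset}$ of the unstable leaf, and no wrapping occurs because $\phi^+(\K^+)$ is bounded. Since $\psi$ is Haar, Fubini in the linear coordinates $E^u\oplus E^s$ shows that the conditionals of $\psi$ along these unstable arcs are one-dimensional Lebesgue measure; comparing the two disintegrations forces each $\nu_v$ to be absolutely continuous. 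As there are finitely many states and $\nu=\phi^+_*(\mu^+)=\sum_v p_v\,\nu_v$ is their finite mixture over the stationary distribution, $\nu$ is absolutely continuous. I expect the main obstacle to be making this disintegration rigorous: the unstable leaves are dense in $\T^r$, so one cannot use the abstract foliation directly but must disintegrate with respect to the genuinely measurable partition coming from fixing the past, push it forward by $\Phi$, and only then invoke Haar-ness to identify the conditionals as Lebesgue on the relevant bounded arcs.
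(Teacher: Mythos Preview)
Your forward direction and your reduction of the hypothesis to ``$\psi$ is Haar on $\T^r$'' coincide with the paper. Your geometric splitting of $\Phi$ into an unstable part $\phi^+((x_k)_{k\ge1})\,u$ and a stable part coming from the past via the conjugates $\beta_2,\ldots,\beta_r$ is also exactly the structure the paper exploits. The difference is in how the conclusion is drawn from ``$\psi$ is Haar''.

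The paper avoids disintegration altogether by packaging your splitting as a factorisation of $\Phi$. It defines a continuous map $\widetilde{\Phi}:\K\to\R^s\times\C^t\cong\R^r$ whose first coordinate is $\phi^+$ of the future and whose remaining coordinates are the convergent past sums $\sum_{j\ge0}x_{-j}\beta_q^j$ for $q=2,\ldots,s+t$; then there is a linear bijection $\rho$ with $\Phi=(\bmod\,1)\circ\rho\circ\widetilde{\Phi}$. Since $\widetilde{\Phi}(\K)$ is compact, the reduction $\bmod\,1$ is finite-to-one there, so if $\psi=(\bmod\,1)_*\rho_*\widetilde{\Phi}_*(\mu)$ is Lebesgue on $\T^r$, then $\widetilde{\Phi}_*(\mu)$ is absolutely continuous on $\R^r$. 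Projecting to the first coordinate gives $\nu$ directly, and the marginal of an absolutely continuous measure is absolutely continuous. This is a two-line endgame once the factorisation is written down.

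Your route via conditioning on the past and comparing leafwise conditionals with Lebesgue is not wrong, but the obstacle you already flag is real: the unstable foliation of $\T^r$ is not a measurable partition, so to make the comparison rigorous you would in effect have to lift back to $\R^r$ and apply Fubini there, which is precisely the paper's shortcut. Your claim that ``no wrapping occurs'' along unstable arcs is fine for $r\ge2$ (the line $\R u$ meets $\Z^r$ only at $0$), but for $r=1$ there is wrapping; the paper's finite-to-one observation handles both cases uniformly. In short, your argument can be completed, but the paper's factorisation $\Phi=(\bmod\,1)\circ\rho\circ\widetilde{\Phi}$ with $\nu=P_*\widetilde{\Phi}_*(\mu)$ reaches the same conclusion without any disintegration machinery.
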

\begin{proof}
  Let $\beta_2,\ldots,\beta_r$ denote the Galois conjugates of $\beta$ and
  assume that $\beta,\beta_2,\ldots,\beta_s\in\mathbb{R}$ and
  $\beta_{s+1},\beta_{s+2},\ldots,\beta_{s+t},\overline{\beta_{s+1}},
  \overline{\beta_{s+2}},\ldots,\overline{\beta_{s+t}}
  \in\mathbb{C}\setminus\mathbb{R}$; then $r=s+2t$. Then the map
  \begin{equation*}
    \widetilde{\Phi}:\K\to\mathbb{R}^s\times\mathbb{C}^t,
    (x_k)_{k\in\mathbb{Z}}\mapsto
    \left(\sum_{k=1}^\infty x_k\beta^{-k},\sum_{k=0}^\infty x_{-k}\beta_2^k,
    \ldots,\sum_{k=0}^\infty x_{-k}\beta_{s+t}^k\right)
\end{equation*}
is continuous and $\widetilde{\Phi}(\K)$ is compact. Together with the map
\begin{multline*}
  \rho:\mathbb{R}^s\times\mathbb{C}^t\to\mathbb{R}^r,
  (y_1,\ldots,y_s,z_{s+1},\ldots,z_{s+t})\mapsto\\
  \left(\beta^my_1-(\beta_2^my_2+\cdots+\beta_s^my_s)-
    2\Re(\beta_{s+1}^mz_{s+1}+\cdots+\beta_{s+t}^mz_{s+t})
  \right)_{m=0}^{r-1}
\end{multline*}
we have
\begin{equation*}
  \Phi=\rho\circ\widetilde{\Phi}\pmod1.
\end{equation*}
The map $\rho$ is linear and $\rho\pmod1$ is finite to one on
$\widetilde{\Phi}(\K)$. Now the measure
$\psi=\!\!\pmod1_*\circ\rho_*\circ\widetilde{\Phi}_*(\mu)$ is equal to the
Lebesgue measure, if and only if $\psihat(m_0,\ldots,m_{r-1})=0$ for all
$(m_0,\ldots,m_{r-1})\in\mathbb{Z}^r\setminus\{\mathbf{0}\}$. If $\psi$ is
Lebesgue measure, then the measure $\nu=P_*\circ\widetilde{\Phi}_*(\mu)$ is
absolutely continuous, where $P$ denotes the projection to the first
coordinate. On the other hand, if $\psi$ is not the Lebesgue measure, then
$\nu$ cannot be absolutely continuous by \eqref{eq:psihat-def}.
\end{proof}
\section{Examples}\label{sec:examples}
\begin{example}
  Let $\K\subset\{0,\pm1\}^{\mathbb{N}}$ be given by the automaton in
  Figure~\ref{fig:fib0} and take $\beta=\frac{1+\sqrt{5}}2$. Then the map
  $\phi^+$ takes the values
  \begin{align*}
    &0&&\nu(\{0\})=\frac1{\gamma^2}\\
    \pm&1&&\nu(\{1\})=\nu(\{-1\})=
    \frac12\left(\frac1\gamma-\frac1{\gamma^2}\right)\\
    \pm&\frac1\beta&&\nu\left(\left\{\frac1\beta\right\}\right)
    =\nu\left(\left\{-\frac1\beta\right\}\right)=\frac1{\gamma^3}
  \end{align*}
  with the indicated probabilities, where $\gamma$ is the positive solution of
  \begin{equation*}
    x^3=x^2+2.
  \end{equation*}
  The value $\gamma$ is the Perron-Frobenius eigenvalue of the adjacency matrix
  of the automaton given by Figure~\ref{fig:fib0}.
  \begin{figure}
    \centering
    \includegraphics[width=0.4\hsize]{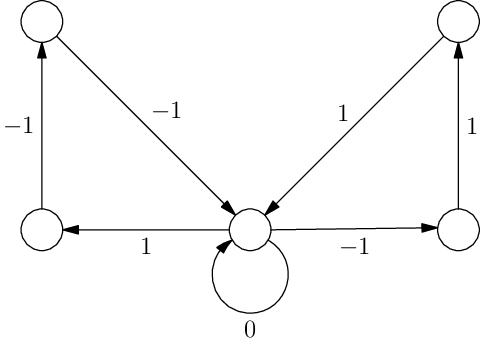}
    \caption{\label{fig:fib0}
      The automaton recognising all expansions of $0$ in base
      $\frac{1+\sqrt{5}}2$ with digits $\{0,\pm1\}$}
  \end{figure}
\end{example}
\begin{example}
  Let $\K\subset\{0,1\}^{\mathbb{N}}$ be the set of sequences of $0$ and $1$
  with no two consecutive $1$s (given by the automaton in
  Figure~\ref{fig:fibo}). These are the digital representations of all real
  numbers in $[0,1]$ obtained by iteration of the $\beta$-transformation
  $T_\beta:x\mapsto\beta x\mod1$ for $\beta=\frac{1+\sqrt{5}}2$
  (see~\cite{Parry1960:beta_expansions_real}). Then the measure $\phi^+(\mu^+)$
  is Lebesgue measure on $[0,1]$.
  \begin{figure}
    \centering
    \includegraphics[width=0.2\hsize]{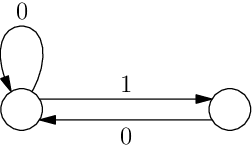}
    \caption{\label{fig:fibo}
      The automaton recognising all greedy expansions in base
      $\frac{1+\sqrt{5}}2$ with digits $\{0,1\}$}
  \end{figure}
\end{example}
\begin{example}\label{ex2}
  Take $\K=\{0,1,2,3\}^\N$ and $\mu$ the infinite product measure assigning
  probability $\frac14$ to each letter. Take $\beta=2$ and consider the map
  $\phi^+$ as above. Then the corresponding measure on $[0,3]$ is given by the
  density
  \begin{equation*}
    h(x)=
    \begin{cases}
      \frac12x&\text{for }0\leq x\leq 1\\
      \frac12&\text{for }1\leq x\leq2\\
      \frac12(3-x)&\text{for }2\leq x\leq3\\
      0&\text{otherwise.}
    \end{cases}
  \end{equation*}
  Projecting this measure $\mod 1$ gives Lebesgue measure, because in this
  case $r=1$ and the map $\Phi$ maps to $\T$.
\end{example}
\begin{example}
  Let $\K^+\subset\{0,1,2\}^\N$ be the set of sequences recognised by the
  automaton in Figure~\ref{fig:fibo2}. These are all expansions of real numbers
  expressed in base $\beta^2$, where $\beta=\frac{1+\sqrt5}2$. This is similar
  to Example~\ref{ex2}, where expansions in base $\beta^2$ are interpreted in
  base $\beta$. In this case the measure is singular, though. This can be seen
  by computing
  \begin{equation*}
    \lim_{n\to\infty}\nuhat(\beta^n)=0.0608424\ldots + 0.0208583\ldots i.
  \end{equation*}
  numerically.
    \begin{figure}
    \centering
    \includegraphics[width=0.2\hsize]{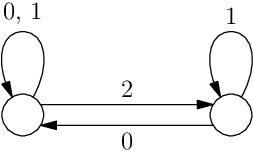}
    \caption{\label{fig:fibo2}The automaton recognising all expansions in base
      $\beta^2=\frac{3+\sqrt{5}}2$ with digits $\{0,1,2\}$}
  \end{figure}
\end{example}
\begin{example}
  This example is taken from
  \cite{Grabner_Heuberger2006:number_optimal_base}. Take
  $\K\subset\{0,\pm1\}^\N$ to be the set of sequences recognised by the
  automaton in Figure~\ref{fig:min}. The automaton with initial state $I$
  recognises all expansions of integers in the form
  \begin{equation*}
    n=\sum_{k=0}^Kx_k2^k\quad\text{with }x_k\in\{0,\pm1\}
  \end{equation*}
  which minimise the weight
  \begin{equation*}
    w(n)=\sum_{k=0}^K|x_k|.
  \end{equation*}
  This is motivated by the fact that this weight is the number of
  additions/subtractions in computing $nP$ by a Horner-type scheme, where $P$
  is a point on an elliptic curve. The measures $\phi_*^+(\mu^+)$ and
  $\phi_*^+(\mu_I^+)$ are singular in this case, as was proved in
  \cite{Grabner_Heuberger2006:number_optimal_base}.
  \begin{figure}
    \centering
    \includegraphics[width=0.8\hsize]{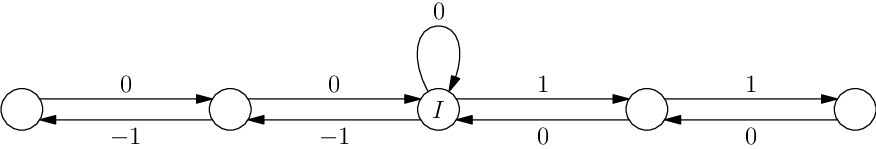}
    \caption{\label{fig:min}
      The automaton recognising expansions in base $2$ with digits
      $\{0,\pm1\}$}
  \end{figure}
\end{example}

\begin{ackno}
  The author is indebted to J\"org~Thuswaldner for very valuable discussions on
  the subject (several years ago in a caf\'e in Pisa).\\
  The author is grateful to an anonymous referee for her/his valuable comments
  that increased the presentation of this paper.
\end{ackno}
\bibliographystyle{amsplain}
\bibliography{refs}
\end{document}